\newif\if@borderstar%
\def\bordermatrix{\@ifnextchar*{%
\@borderstartrue\@bordermatrix@i}{\@borderstarfalse\@bordermatrix@i*}%
}
\def\@bordermatrix@i*{\@ifnextchar[{\@bordermatrix@ii}{\@bordermatrix@ii[()]}}
\def\@bordermatrix@ii[#1]#2{%
\begingroup
\m@th\@tempdima8.75\p@\setbox\z@\vbox{%
\def\cr{\crcr\noalign{\kern 2\p@\global\let\cr\endline }}%
\ialign {$##$\hfil\kern 2\p@\kern\@tempdima & \thinspace %
\hfil $##$\hfil && \quad\hfil $##$\hfil\crcr\omit\strut %
\hfil\crcr\noalign{\kern -\baselineskip}#2\crcr\omit %
\strut\cr}}%
\setbox\tw@\vbox{\unvcopy\z@\global\setbox\@ne\lastbox}%
\setbox\tw@\hbox{\unhbox\@ne\unskip\global\setbox\@ne\lastbox}%
\setbox\tw@\hbox{%
$\kern\wd\@ne\kern -\@tempdima\left\@firstoftwo#1%
\if@borderstar\kern2pt\else\kern -\wd\@ne\fi%
\global\setbox\@ne\vbox{\box\@ne\if@borderstar\else\kern 2\p@\fi}%
\vcenter{\if@borderstar\else\kern -\ht\@ne\fi%
\unvbox\z@\kern-\if@borderstar2\fi\baselineskip}%
\if@borderstar\kern-2\@tempdima\kern2\p@\else\,\fi\right\@secondoftwo #1 $%
}\null \;\vbox{\kern\ht\@ne\box\tw@}%
\endgroup
}
\newcommand{\old}[1]{}
\newcommand{\Pf}{\operatorname{Pf}}
\newcommand{\cross}{\operatorname{cr}}
\newcommand{\sign}{\operatorname{sign}}
\renewcommand{\L}{\mathscr{L}}
\newcommand{\A}{\mathscr{A}}
\newcommand{\M}{\mathscr{M}}
\newcommand{\Z}{\mathscr{Z}}
\newcommand{\Zs}{\accentset\star\Z}
\newcommand{\U}[1]{\overset{#1}{\textsf{U}}}
\newcommand{\D}[1]{\overset{#1}{\textsf{D}}}
\newcommand{\F}[1]{\overset{#1}{\textsf{F}}}
\newcommand{\I}[1]{\overset{#1}{\textsf{I}}}
\newcommand{\s}{\textsf{S}}
\renewcommand{\S}[1]{\overset{#1}{\textsf{S}}}
\newcommand{\Op}{\Osc{\oplus}}
\newcommand{\Om}{\Osc{\ominus}}
\newcommand{\Oc}{\Osc{\ocircle}}
\newcommand{\Od}{\Osc{\odot}}
\newcommand{\Osc}[1]{\mathchoice{\scalebox{1.2}{$#1$}}{\scalebox{1.2}{$#1$}}{\scalebox{0.83333}{$#1$}}{\scalebox{.6}{$#1$}}}
\newcommand{\OP}[1]{\overset{#1}{\Op}}
\newcommand{\OM}[1]{\overset{#1}{\Om}}
\newcommand{\OD}[1]{\overset{#1}{\Od}}
\newcommand{\OC}[1]{\overset{#1}{\Oc}}
\renewcommand{\SS}[1]{\OC{#1}\OD{#1}}
\newcommand{\II}[1]{\OD{#1}\OC{#1}}
\newcommand{\C}{\mathbb{C}}
\newcommand{\G}{\mathcal G}
\renewcommand{\L}{\mathscr{L}}
\newcommand{\Gv}{\mathscr{G}}
\newtheorem{theorem}{Theorem}
\newtheorem{lemma}[theorem]{Lemma}
\newtheorem{corollary}[theorem]{Corollary}
\numberwithin{theorem}{section}
\newcommand{\fm}{\phantom{-}}
\newcommand{\ijNN}[1]{{\mathrlap{\raisebox{12pt}{$\scriptstyle j=1,\ldots,#1$}}i=1,\dots,#1}}
\newcommand{\ijnn}{\ijNN{n}}
\begin{document}
\title{\mbox{Pfaffian formulas for spanning tree probabilities}}
\author{\begin{tabular}{c}\href{http://www.math.ucla.edu/~panova/}{Greta Panova}\\[-4pt]\small University of Pennsylvania\end{tabular} \and \begin{tabular}{c}\href{http://dbwilson.com}{David B. Wilson}\\[-4pt]\small Microsoft Research\end{tabular}}
\date{}
\renewcommand{\thefootnote}{\fnsymbol{footnote}}
\footnotetext{2010 Mathematics subject classification: 60C05, 82B20, 05C05, 05C50}
\renewcommand{\thefootnote}{\arabic{footnote}}

\maketitle
\thispagestyle{empty}
\begin{abstract}
We show that certain topologically defined uniform spanning tree
probabilities for graphs embedded in an annulus can be computed as
linear combinations of Pfaffians of matrices involving the line-bundle
Green's function, where the coefficients count cover-inclusive Dyck
tilings of skew Young diagrams.
\end{abstract}

\section{Introduction}

In \cite{KW4} it was shown that the probabilities of topologically defined uniform spanning tree events can be computed as linear combinations of determinants of matrices whose entries involve the Green's function $G$ and the derivative $G'$ of the ``line-bundle Green's function''.  These probabilities were used to compute the intensity of loop-erased random walk \cite{KW4} and the probabilities of local events in the abelian sandpile model \cite{wilson:sand-local}.  We give another formula involving Pfaffians.  In addition to being (somewhat) computationally more efficient, the Pfaffian formula implies some structural properties of the polynomials in $G$ and $G'$.  First, it becomes apparent that the coefficients of the polynomials are integers --- previously the coefficients were only known to be half-integers.  Second, if each $G'_{u,v}$ variable is replaced with $G'_{u,v}+f(u)-f(v)$, then each polynomial is unchanged.  This invariance property was observed earlier for small sizes, but a general proof was missing until now, and it simplifies some of the sandpile calculations.

For background on the line-bundle Laplacian, response matrix, and Green's function for graphs embedded in surfaces, and their use in computing spanning tree probabilities, we refer the reader to \cite{KW4}.  Here we summarize the key facts that we use.

The line-bundle Green's function $\Gv_{u,v}(z)$ is a generalization of the usual Green's function $G_{u,v}$, where $u$ and $v$ are vertices of a graph $\G$, and $z\in\C$.  When $z=1$ it specializes to the usual Green's function:
\[
G_{u,v}=G_{v,u} =\Gv_{u,v}(1)\,.
\]
The line-bundle Green's function $\Gv$ has the symmetry $\Gv_{v,u}(z)=\Gv_{u,v}(1/z)$.  We define
\[
G'_{u,v}=\left[\frac{d}{dz} \Gv_{u,v}(z)\right]_{z=1}\,,
\]
which is antisymmetric, and is what we referred to as the derivative of the line-bundle Green's function.  From the symmetry of $\Gv$ it follows that $G'$ is antisymmetric: $G'_{v,u}=-G'_{u,v}$.
For the Green's function there is a designated sink vertex $s$ (which has been suppressed from the notation) for which
\[ G_{u,s} = 0 \]
for each vertex $u$.

There is another set of electrical variables that are useful to work with, the response matrix, or the Dirichlet-to-Neumann matrix $L_{u,v}$.  The response matrix is defined with respect to a designated set of vertices which we call \textbf{nodes}.  We think of the nodes as being ``boundary vertices'', and the other vertices as being internal, and the response matrix gives the linear map from voltages to current flows.  In the line bundle setting we denote the response matrix by $\L_{u,v}(z)$.  Here too $\L_{v,u}(z)=\L_{u,v}(1/z)$, and the line-bundle response matrix specializes to the usual response matrix when $z=1$,
\[
L_{u,v}=L_{v,u}=\left[\L_{u,v}(z)\right]_{z=1}\,,
\]
which is symmetric, and we define
\[
L'_{u,v}=\left[\frac{d}{dz} \L_{u,v}(z)\right]_{z=1}\,
\]
which is antisymmetric.  The response matrix variables satisfy the additional relation
\[ \sum_v L_{u,v} = 0
\]
for each vertex $u$.

Spanning tree probabilities for a graph $\G$ embedded on an annulus can be computed in terms of either set of variables, $\{G_{u,v},G'_{u,v}\}$ or $\{L_{u,v},L'_{u,v}\}$.

Suppose that graph $\G$ has $n$ nodes, which we label $\{1,\dots,n\}$.  A \textbf{grove} is a forest such that each tree contains at least one node.  Groves were first studied by Carroll and Speyer \cite{CS}, and then more systematically by Kenyon and Wilson \cite{KW1}, who gave this current definition.  Any grove induces a set partition $\sigma$ on the nodes where each set consists of the nodes from the same tree.  We let $Z[\sigma]$ denote the weighted sum of groves whose induced partition is $\sigma$.  The weighted sum of spanning trees $Z[\text{tree}]=Z[1,\dots,n]$ can be computed via the matrix-tree theorem, so we are interested in computing the ratios
\begin{equation} \label{eq:Zdot}
\dddot Z[\sigma] \colonequals \frac{Z[\sigma]}{Z[1|2|\cdots|n]}
\end{equation}
or
\begin{equation} \label{eq:Zbar}
\overline{Z}[\sigma] \colonequals \frac{Z[\sigma]}{Z[\text{tree}]}.
\end{equation}

Suppose that the graph $\G$ is embedded in an annulus so that the nodes $1,\dots,n-1$ are arranged in cyclic order on one boundary of the annulus, while node $n$ is on the other boundary of the annulus.  The grove partition function ratios \eqref{eq:Zdot} and \eqref{eq:Zbar} for these ``\textbf{annular-one graphs}'' $\G$ were used to compute probabilities for loop-erased random walk \cite{KW4} and for recurrent sandpile configurations \cite{wilson:sand-local}.  For annular-one graphs, $\dddot Z[\sigma]$ can be expressed in terms of a linear combination of determinants involving $L$ and $L'$, while $\overline{Z}_\sigma$ can be expressed in terms of a linear combination of determinants involving $G$ and $G'$ \cite{KW4}.  We shall re-express them as linear combinations of Pfaffians.

It turns out that $Z[\sigma]$ is itself a linear combination of $Z[\tau]$'s, where each $\tau$ is a ``partial pairing'' of the nodes $1,\dots,n$ \cite{KW4}.  A \textbf{partial pairing} is a set of pairs of nodes, singletons, and ``internalized'' nodes, which are not listed in the partition, but which may appear in any of the parts (like the other non-node vertices in a grove).  For example,
\begin{multline*}
   Z[2,6,9|3,4,5|7|1,8] = Z[2,9|3,5|7|1,8] - Z[2,9|3,6|7|1,8] \\- Z[2,9|3,5|6,7|1,8] - Z[2,9|3,5|1,6|7] + Z[2,9|3,5|1,6|7,8]\,.
\end{multline*}
It turns out that for the LERW and sandpile applications it suffices to assume that node $n$ is in a doubleton part.

Kenyon and Wilson \cite{KW4} showed that $Z[\tau]/Z[1|2|\cdots|n]$ can
be expressed as a linear combination of determinants involving the
$L_{i,j}$'s and $L'_{i,j}$'s, and that $Z[\tau]/Z[1,2,\cdots,n]$ can
be expressed as a linear combination of determinants involving the
$G_{i,j}$'s and $G'_{i,j}$'s.  We will give these determinant formulas
in the next section, since they are the starting point of the present work.

\subsection{Partial pairings in terms of Pfaffians}

For a partial pairing $\tau$ in which node $n$ is in a doubleton part,
we can encode $\tau$
by a string $\lambda$ of $n$ symbols, where the symbol at position $i$ encodes the role of node~$i$ in the partial pairing.
For bookkeeping purposes that will soon become apparent,
we label each symbol with the label of the node that it represents; when the labels
are $1,\dots,n$ we sometimes omit the labels.
For example, for the annular partial pairing
\[\tau=1,5|2|3,4|7,10|8,14|11|12,13 \quad=\quad
\begin{tikzpicture}[baseline=0.0cm-2.5pt]
\draw[fill=gray!40!white,draw=none] (0,0) circle(1.5);
\draw[fill=white,draw=none] (0,0) circle(.4);
    \foreach \x in {1,...,13} { \coordinate (\x) at ({1.5*cos((\x-0.5)*360/13)},{1.5*sin((\x-0.5)*360/13)});}
    \coordinate (14) at ({360*15/25}:.4);
    \draw[very thick] (14)--(8);
    \tkzDefPoint({-360/13}:1.5){1213};
    \tkzDefPoint({360*3/13}:1.5){34};
    \tkzDrawArc[very thick,color=black](1213,13)(12);
    \tkzDrawArc[very thick,color=black](34,4)(3);
    \draw[very thick] (1) .. controls +({360*13.5/26}:.5) and +({360*22.5/26}:.5) .. (5);
    \draw[very thick] (7) .. controls (65:1.2) and (15:1.2) .. (10);
    \foreach \x in {1,2,3,4,5,7,8,10,11,12,13,14} {\node [circle,fill=white,draw=black,inner sep=0.5pt] at (\x) {$\phantom{3}\mathclap{\x}\phantom{3}$};}
\end{tikzpicture}
\]
the associated (labeled) encoding string is
\begin{equation} \label{lambda}
\lambda =\lambda(\tau) = \U{1}\S{2}\U{3}\D{4}\D{5}\I{6}\D{7}\F{8}\I{9}\U{10}\S{1\!1}\U{12}\D{13}\OD{14}\,.
\end{equation}

Here node $n$, which is on the other boundary, is given the special symbol $\Od$.  The node paired with $n$ is also given a special symbol, \textsf{F}.  (So each $\lambda(\tau)$ has exactly one $\F{}$ and one $\Od$ symbol.)  \textsf{I} indicates that the node has been internalized, and \textsf{S} indicates a node in a singleton part.  The remaining nodes are assigned the symbols \textsf{U} and \textsf{D} so that when the \textsf{F} is cyclically rotated to the end, the substring $\lambda^\circ$ formed by the \textsf{U}'s and \textsf{D}'s defines a (labeled) Dyck path whose associated noncrossing matching is the pairing of the nodes in $\tau$.
In the above example,
\[ \lambda^\circ = \U{10}\U{12}\D{13}\U{1}\U{3}\D{4}\D{5}\D{7}
\quad=\quad
\begin{tikzpicture}[scale=0.4,baseline=0.0cm-1pt]
\draw[thick] (0,0) -- (1,1) -- (2,2) -- (3,1) -- (4,2) -- (5,3) -- (6,2) -- (7,1) -- (8,0);
\draw[fill,draw=none] (0,0) circle(.1);
\draw[fill,draw=none] (1,1) circle(.1);
\draw[fill,draw=none] (2,2) circle(.1);
\draw[fill,draw=none] (3,1) circle(.1);
\draw[fill,draw=none] (4,2) circle(.1);
\draw[fill,draw=none] (5,3) circle(.1);
\draw[fill,draw=none] (6,2) circle(.1);
\draw[fill,draw=none] (7,1) circle(.1);
\draw[fill,draw=none] (8,0) circle(.1);
\node at (0.5,0.5) {\contour{white}{$\scriptstyle 10$}};
\node at (1.5,1.5) {\contour{white}{$\scriptstyle 12$}};
\node at (2.5,1.5) {\contour{white}{$\scriptstyle 13$}};
\node at (3.5,1.5) {\contour{white}{$\scriptstyle 1$}};
\node at (4.5,2.5) {\contour{white}{$\scriptstyle 3$}};
\node at (5.5,2.5) {\contour{white}{$\scriptstyle 4$}};
\node at (6.5,1.5) {\contour{white}{$\scriptstyle 5$}};
\node at (7.5,0.5) {\contour{white}{$\scriptstyle 7$}};
\end{tikzpicture}
\quad=\quad
\begin{tikzpicture}[scale=0.4,baseline=0.0cm-1pt]
\draw[thick] (0,0) -- (1,1) -- (2,2) -- (3,1) -- (4,2) -- (5,3) -- (6,2) -- (7,1) -- (8,0);
\draw[fill,draw=none] (0,0) circle(.1);
\draw[fill,draw=none] (1,1) circle(.1);
\draw[fill,draw=none] (2,2) circle(.1);
\draw[fill,draw=none] (3,1) circle(.1);
\draw[fill,draw=none] (4,2) circle(.1);
\draw[fill,draw=none] (5,3) circle(.1);
\draw[fill,draw=none] (6,2) circle(.1);
\draw[fill,draw=none] (7,1) circle(.1);
\draw[fill,draw=none] (8,0) circle(.1);
\draw[thick,color=gray] (0.5,0.5)--(7.5,0.5);
\draw[thick,color=gray] (1.5,1.5)--(2.5,1.5);
\draw[thick,color=gray] (3.5,1.5)--(6.5,1.5);
\draw[thick,color=gray] (4.5,2.5)--(5.5,2.5);
\node at (0.5,0.5) {\contour{white}{$\scriptstyle 10$}};
\node at (1.5,1.5) {\contour{white}{$\scriptstyle 12$}};
\node at (2.5,1.5) {\contour{white}{$\scriptstyle 13$}};
\node at (3.5,1.5) {\contour{white}{$\scriptstyle 1$}};
\node at (4.5,2.5) {\contour{white}{$\scriptstyle 3$}};
\node at (5.5,2.5) {\contour{white}{$\scriptstyle 4$}};
\node at (6.5,1.5) {\contour{white}{$\scriptstyle 5$}};
\node at (7.5,0.5) {\contour{white}{$\scriptstyle 7$}};
\end{tikzpicture}
\,.
\]
We call the string $\lambda$ the \textbf{augmented cyclic Dyck path} associated with the partial pairing~$\tau$ --- ``augmented'' because it contains symbols not in the Dyck path $\lambda^\circ$, and ``cyclic'' because its start is determined by the location of the $\F{}$ symbol.

Given two labeled augmented cyclic Dyck paths $\lambda$ and $\mu$, we say that $\lambda\preceq\mu$ if they are the same length, have the same labels,
all the letters other than $\U{}$ and $\D{}$ are the same in both $\lambda$ and $\mu$, and as Dyck paths, $\lambda^\circ$ lies below $\mu^\circ$.

If $\lambda$ is a \textbf{labeled string}, we let $\mathbf\lambda_i$ denote its $i$th labeled symbol, and we let $\lambda(i)$ denote the label of $\lambda_i$.

For a labeled augmented cyclic Dyck path~$\mu$, we define $\mu^{\I{}}$ to be the labeled string obtained from~$\mu$ by deleting all the $\s$ letters, and replacing each $\I{i}$ with the two letters $\II{i}$.  We also define $\mu^\s$ to be the labeled string obtained from $\mu$ by deleting all the $\I{}$ letters, and replacing each $\S{i}$ with the two letters $\SS{i}$.  For example, if $\mu$ is the labeled augmented cyclic Dyck path in \eqref{lambda}, then
\[
\mu^{\I{}} = \U{1}\U{3}\D{4}\D{5}\II{6}\D{7}\F{8}\II{9}\U{10}\U{12}\D{13}\OD{14}
\]
and
\[
\mu^\s = \U{1}\SS{2}\U{3}\D{4}\D{5}\D{7}\F{8}\U{10}\SS{1\!1}\U{12}\D{13}\OD{14}\,.
\]
Next we define $\dddot\mu$ and $\overline\mu$.  Recall that there is only one letter $\F{}$ in $\mu$; let $f$ be its label, so that $\F{f}\in\mu$.
For each letter $\U{i}$, $\D{i}$, $\F{f}$ in $\mu^\s$, we make the substitutions
\begin{align*}
 \U{i} \mapsto \begin{cases} \OP{i} & i<f \\ \OC{i} & i>f \end{cases}
\quad\quad\quad\quad
 \D{i} \mapsto \begin{cases} \OC{i} & i<f \\ \OM{i} & i>f \end{cases}
\quad\quad\quad\quad
 \F{f} \mapsto \OC{f}
\end{align*}
to obtain $\overline\mu$.
We let $\dddot\mu$ be the result of these same substitutions applied to $\mu^{\I{}}$.
For our example,
\[
\dddot\mu = \OP{1}\OP{3}\OC{4}\OC{5}\II{6}\OC{7}\OC{8}\II{9}\OC{10}\OC{12}\OM{13}\OD{14}
\]
and
\[
\overline\mu = \OP{1}\SS{2}\OP{3}\OC{4}\OC{5}\OC{7}\OC{8}\OC{10}\SS{1\!1}\OC{12}\OM{13}\OD{14}\,.
\]
The original string $\mu$ can be recovered from either $\overline\mu$ or $\dddot\mu$.

Given a string $\sigma$ of $m$ labeled symbols $\Op$, $\Om$, $\Oc$, $\Od$, such as as the ones above, we define an $m\times m$ matrix $M_\sigma(A,A')$ by
\[
M_\sigma(A,A') \colonequals \bordermatrix[{[]}]{
&\scriptstyle\sigma_j\neq\Od & \scriptstyle\sigma_j=\Od \cr
\scriptstyle\sigma_i\neq\Od &
-A'_{\sigma(i),\sigma(j)} + A_{\sigma(i),\sigma(j)}
\left(\!\begin{aligned}+1_{\sigma_i=\Op}-1_{\sigma_j=\Op}\\[-3pt]
-1_{\sigma_i=\Om}+1_{\sigma_j=\Om}\end{aligned}\right)
 &
A_{\sigma(i),\sigma(j)}
\cr
\scriptstyle\sigma_i=\Od &
-A_{\sigma(i),\sigma(j)}
& 0
}_{\ijNN{m}\,.}
\]
The symbols $\Op$, $\Om$, and $\Oc$ are mnemonic for $+1$, $-1$, and $0$, which
go into the coefficient of $A_{\sigma(i),\sigma(j)}$ when $\sigma_i,\sigma_j\neq\Od$.
For example, when $\sigma$ is the above value for~$\overline\mu$, this matrix is
\[
\scalebox{0.7}{
\bordermatrix[{[]}]{
&\overset{-}{1}&2&\overset{\Od}{2}&\overset{-}{3}&4&5\ \ 7\ \ 8\ \ 10\ \ 11&\overset{\Od\,}{11}&12&\overset{+}{13}&\overset{\Od\,}{14}\cr
\mathllap{{\scriptstyle{}^+}\,1}&
  0 & A_{1,2}{-}A'_{1,2} & A_{1,2} & -A'_{1,3} & A_{1,4}{-}A'_{1,4}&\cdots\cdots\cdots&A_{1,11}&A_{1,12}{-}A'_{1,12}&2A_{1,13}{-}A'_{1,13}&A_{1.14}\cr
\mathllap{2}&
A'_{1,2}{-}A_{1,2}&0&A_{2,2}&-A_{2,3}{-}A'_{2,3}&-A'_{2,4}&\cdots\cdots\cdots&A_{2,11}&-A'_{2,12}&A_{2,13}{-}A'_{2,13}&A_{2,14}\cr
\mathllap{\raisebox{1pt}{$\scriptstyle\Od$}\,2}&
-A_{1,2}&-A_{2,2}&0&-A_{2,3}&-A_{2,4}&\cdots\cdots\cdots&0&-A_{2,12}&-A_{2,13}&0 \cr
\mathllap{{\scriptstyle{}^+}\,3}&
\vdots&\vdots&\ddots&0&A_{3,4}{-}A'_{3,4}&\cdots\cdots\cdots&A_{3,11}&A_{3,12}{-}A'_{3,12}&2A_{3,13}{-}A'_{3,13}&A_{3,14}\cr
\mathllap{4}&
&&&&0&\cdots\cdots\cdots&A_{4,11}&-A'_{4,12}&A_{4,13}{-}A'_{4,13}&A_{4,14}\cr
\mathllap{5}&
&&&&&\ddots&\vdots&\vdots&\vdots&\vdots\cr
\mathllap{7}\cr
\mathllap{\vdots}
}}
\]

We define $M_\mu^{(L)}=M_{\dddot\mu}(L,L')$, and $M_\mu^{(G)}=M_{\overline\mu}(G,G')$,
where $G_{i,n}$ is replaced with~$1$.
Both $M_\mu^{(L)}$ and $M_\mu^{(G)}$ are antisymmetric.  The new formulas involve Pfaffians of these matrices $M^{(L)}_\mu$ and $M^{(G)}_\mu$.

\begin{figure}[b!]
\centerline{\includegraphics[width=\textwidth]{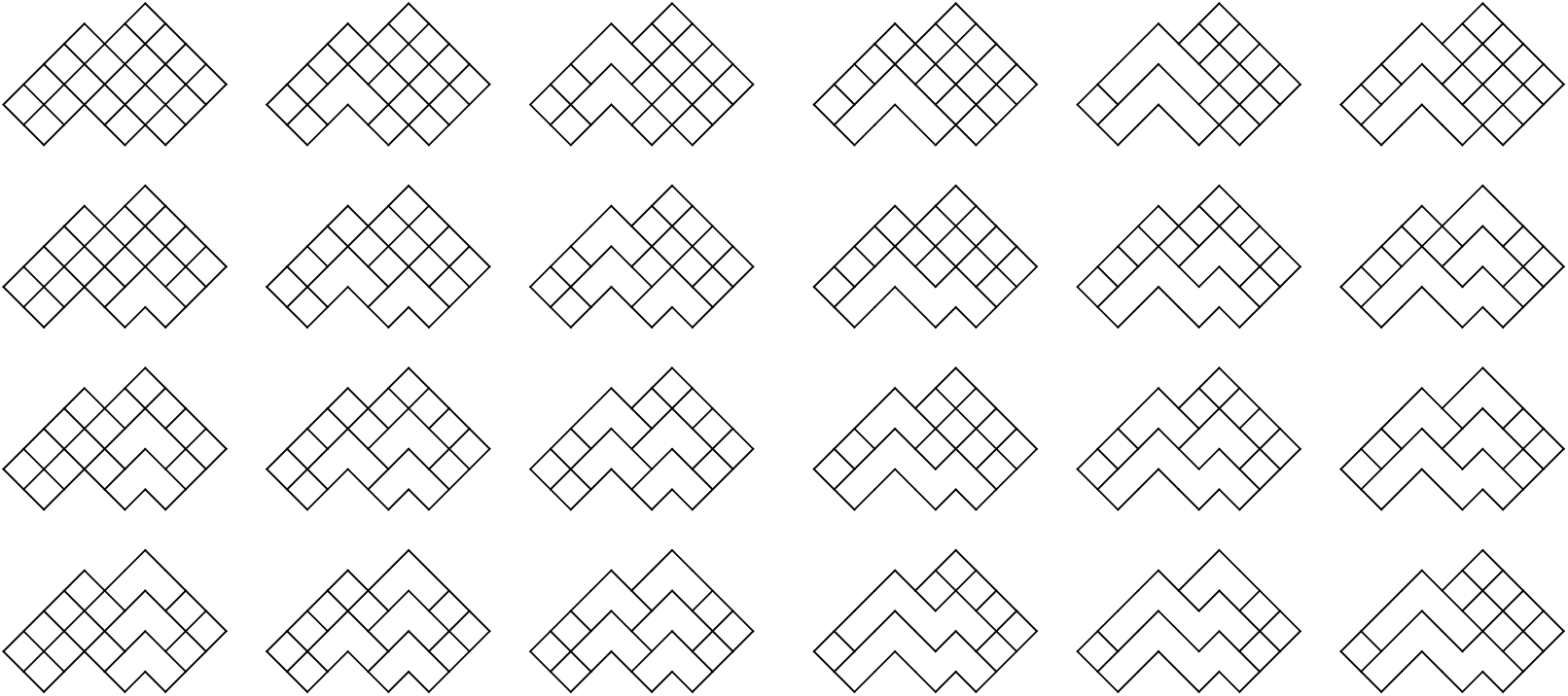}}
\caption{Cover-inclusive Dyck tilings of a skew shape.  (This figure first appeared in \cite{KW3}.)}
\label{fig:dyck-tilings}
\end{figure}

The new formulas have coefficients that are defined in terms of ``\textbf{cover-inclusive Dyck tilings}'', which were first defined in \cite{KW3} and independently in \cite{MR2927185}, and were studied further in \cite{kim,KMPW,KW4,KW5,JVK},
and whose definition we now recall.
If $\lambda$ and $\mu$ are Dyck paths such that $\lambda$ is below $\mu$, then the region $\lambda/\mu$ is a skew Young diagram (rotated $45^\circ$).  A Dyck tile is a ribbon tile which is shaped like a Dyck path, i.e., a collection of $\sqrt{2}\times\sqrt{2}$ boxes rotated $45^\circ$ centered at the points of a Dyck path.  A Dyck tiling of $\lambda/\mu$ is a tiling of it by Dyck tiles.  We say that one Dyck tile covers another Dyck tile if it contains a box which is directly (not diagonally) above a box of the other tile.  A cover-inclusive Dyck tiling is one for which, whenever a Dyck tile $T_1$ covers another Dyck tile $T_2$, the range of $x$-coordinates of $T_1$ is a subset of the range of $x$-coordinates of $T_2$.  See Figure~\ref{fig:dyck-tilings} for a list of the Dyck tilings of a particular skew shape $\lambda/\mu$.

\begin{theorem} \label{thm:Pf-udfe}
Suppose $\tau$ is a partial pairing of the nodes of an annular-one graph with $n$ nodes, where node $n$ is paired in $\tau$.  Let $\lambda$ be the labeled augmented cyclic Dyck path which encodes $\tau$.  Then
\begin{equation} \label{dot-sum-PfL}
\frac{Z[\tau]}{Z[1|2|\cdots|n]} = \sum_{\mu\succeq\lambda} [\text{\# of c.i.\ Dyck tilings of $\lambda^\circ/\mu^\circ$}] \times \Pf M^{(L)}_\mu
\end{equation}
and
\begin{equation} \label{bar-sum-PfG}
\frac{Z[\tau]}{Z[1,2,\dots,n]} = \sum_{\mu\succeq\lambda} [\text{\# of c.i.\ Dyck tilings of $\lambda^\circ/\mu^\circ$}] \times \Pf M^{(G)}_\mu\,.
\end{equation}
\end{theorem}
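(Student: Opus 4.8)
The plan is to derive both identities \eqref{dot-sum-PfL} and \eqref{bar-sum-PfG} from the determinant formulas of Kenyon and Wilson, which (as recalled in the next section) already express $Z[\tau]/Z[1|2|\cdots|n]$ and $Z[\tau]/Z[1,2,\dots,n]$ as the analogous sums over $\mu\succeq\lambda$, weighted by the same cover-inclusive Dyck tiling numbers, of determinants $\det N^{(L)}_\mu$ and $\det N^{(G)}_\mu$ assembled from the $\dddot\mu$- and $\overline\mu$-encodings. Granting that these determinant formulas have this shape, both statements reduce to the single, purely linear-algebraic claim that for every $\mu\succeq\lambda$ one has, term by term,
\[
\det N^{(L)}_\mu=\Pf M^{(L)}_\mu \qquad\text{and}\qquad \det N^{(G)}_\mu=\Pf M^{(G)}_\mu,
\]
with no Dyck-tiling coefficients entering this step at all. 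The entire combinatorial content---the appearance of Dyck tilings---is then inherited unchanged from the determinant formulas, and the novelty lies only in replacing each determinant by a Pfaffian of half the width.

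To establish the determinant-to-Pfaffian identity I would work with the abstract matrix $M_\sigma(A,A')$ for a formal symmetric $A$ and antisymmetric $A'$, so that the $L$- and $G$-cases are two specializations of one computation. The key structural observation is that the non-$\Od$ block of $M_\sigma$ can be written as $-A'+[D,A]$, where $D=\operatorname{diag}(\epsilon_i)$ records $\epsilon_i=+1,-1,0$ according to whether $\sigma_i$ is $\Op,\Om,\Oc$; indeed the coefficient of $A_{\sigma(i),\sigma(j)}$ in that block is exactly $\epsilon_i-\epsilon_j$, so that $[D,A]_{ij}=(\epsilon_i-\epsilon_j)A_{\sigma(i),\sigma(j)}$. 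I would then use the transformation law $\Pf(P^{\mathsf T}MP)=\det(P)\,\Pf(M)$ and exhibit a unimodular congruence $P$ that, after reordering rows and columns so that each up-step of $\mu^\circ$ is placed opposite its matched down-step, brings $M_\mu$ into block anti-diagonal form $\left[\begin{smallmatrix}0&N_\mu\\-N_\mu^{\mathsf T}&0\end{smallmatrix}\right]$. For such a matrix $\Pf=\pm\det N_\mu$, and identifying the block $N_\mu$ with the Kenyon--Wilson matrix is a direct check of entries; the \emph{noncrossing} matching encoded by $\mu^\circ$ is exactly what makes this opposite-pairing congruence unimodular.

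The two specializations then require separate bookkeeping of the side conditions. For the $L$-case one uses $M^{(L)}_\mu=M_{\dddot\mu}(L,L')$, where each internalized node has been expanded via $\I{i}\mapsto\II{i}$ and the singletons deleted; the row-sum relation $\sum_v L_{u,v}=0$ is what lets the two rows and columns produced by the $\Od$-symbols be eliminated against the others during the congruence. For the $G$-case one uses $M^{(G)}_\mu=M_{\overline\mu}(G,G')$ with $G_{i,n}$ replaced by $1$, the singletons expanded via $\S{i}\mapsto\SS{i}$ and the internalized nodes deleted; here the sink condition $G_{u,s}=0$ together with the normalization $G_{i,n}=1$ plays the analogous role. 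In both cases the distinction between $\mu^{\I{}}$ and $\mu^\s$ merely dictates which auxiliary $\Od$-rows are present, and $P$ is assembled compatibly in each.

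The main obstacle I anticipate is sign and normalization bookkeeping rather than any single hard idea: one must reconcile the global sign $(-1)^{\binom{k}{2}}$ from the block trick, the $\sign$ of each matching in the Pfaffian expansion, and the factor $\det P$, and verify that together they reproduce the determinant with coefficient exactly $+1$---which is also precisely what forces the resulting polynomials to have integer coefficients, as claimed in the introduction. A secondary technical point is the consistent treatment of the $\Od$-rows and columns (those coming from node~$n$ and from the marginalization markers), since these are the only entries where the symmetric variable $A$ appears unaccompanied by $A'$; confirming that their elimination matches the boundary behavior ($\sum_v L_{u,v}=0$, respectively $G_{i,n}=1$) already built into the Kenyon--Wilson determinants is the one place where the $L$- and $G$-specializations genuinely diverge, and is where I would expect to spend the most care.
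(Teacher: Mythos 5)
There is a genuine gap, and it sits at the very first step. You assume that the Kenyon--Wilson determinant formulas ``already express'' the ratios as sums over $\mu\succeq\lambda$ of Dyck-tiling counts times single determinants $\det N^{(L)}_\mu$, $\det N^{(G)}_\mu$, so that the whole theorem reduces to a term-by-term identity $\det N_\mu=\Pf M_\mu$. That is not the shape of those formulas. Equations \eqref{dot-sum-detL} and \eqref{bar-sum-detG} are sums over exponentially many subset pairs $(R,S)$ of determinants $\det\hat\L_{R,T}^{S,T}$, with the $\mu$-dependence hidden inside the coefficient polynomials $\B_{\lambda,S}(\zeta)$ of \eqref{eq:A-inverse}; moreover $\B_{\lambda,S}$ is not just a tiling count but carries powers of $\zeta$ depending jointly on $S$ and $\mu$, which you do not account for. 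After interchanging the two summations, what must be shown is that for each fixed $\mu$ the entire weighted sum over $(R,S)$ of determinants collapses to one Pfaffian. That is the actual content of the theorem, and it is captured by the paper's key identity (Theorem~\ref{Pf}): $\sum_R d_R(A)=\Pf\big[A-A^T\big]$, a Pfaffian as a \emph{sum} of determinants over all balanced bipartitions, followed by a row/column rescaling to absorb the $\zeta$-powers and the $1/(1-e^{2t})^k$ normalization, and a $t\to0$ limit --- none of which appear in your plan.

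Because of this, your proposed mechanism cannot work: there is no unimodular congruence bringing $M_\mu$ to block anti-diagonal form $\left[\begin{smallmatrix}0&N_\mu\\-N_\mu^{\mathsf T}&0\end{smallmatrix}\right]$ with $N_\mu$ a Kenyon--Wilson submatrix, since $\Pf M_\mu$ is generically not a single determinant in the original entries. The paper's own first example makes this concrete: $\Pf M^{(L)}_{\textsf{DFU}\Od}=-L'_{1,2}L_{3,4}-L'_{2,3}L_{1,4}-L'_{3,1}L_{2,4}$ has three monomials, one per perfect matching of four points, whereas any $2\times2$ determinant of entries of $M^{(L)}_{\textsf{DFU}\Od}$ contributes only the two matchings compatible with a fixed bipartition. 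A secondary issue: you lean on the relations $\sum_v L_{u,v}=0$ and the sink condition for $G$ to ``eliminate'' the $\Od$-rows, but the paper deliberately works with formal variables satisfying no such relations (so that one argument covers both the $L$-$L'$ and $G$-$G'$ cases); the $\Od$-rows are never eliminated --- they survive as the $A_{\sigma(i),\sigma(j)}$ border of $M_\sigma$ and are produced by the $\SS{i}$ and $\II{i}$ expansions together with the choice $C=C_\mu\cup\{m\}$ in the paper's Lemma~\ref{lem:Pf(A[B,C,U,V])}.
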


\subsection{Examples}
\enlargethispage{12pt}
We give a couple of examples:

For the partial pairing $1,3|2,4$,
the encoding string $\lambda$ is $\D{1}\F{2}\U{3}\OD{4}$,
the only $\mu$ in the sum is $\mu=\D{1}\F{2}\U{3}\OD{4}$,
for which the skew Young diagram $\lambda^\circ/\mu^\circ$ has only the empty Dyck tiling, so the coefficient is $1$.  For this $\mu=\D{1}\F{2}\U{3}\OD{4}$, $\dddot\mu=\OC{1}\OC{2}\OC{3}\OD{4}$, so
\[
  \frac{Z[1,3|2,4]}{Z[1|2|3|4]} =
\Pf\underbrace{\begin{bmatrix}
0 & -L_ {1,2}' & -L_ {1,3}' & L_{1,4} \\
 \fm L_ {1,2}' & 0 & -L_ {2,3}' & L_{2,4} \\
 \fm L_ {1,3}' & \fm L_ {2,3}' & 0 & L_{3,4} \\
 -L_{1,4} & -L_{2,4} & -L_{3,4} & 0
\end{bmatrix}}_{M^{(L)}_{\textsf{DFU}\Od}}
= -L'_{1,2} L_{3,4} - L'_{2,3} L_{1,4} - L'_{3,1} L_{2,4}\,,
\]
which matches \cite[eqn.~5.5b]{KW4}, and $\overline\mu=\OC{1}\OC{2}\OC{3}\OD{4}$, so
\[
  \frac{Z[1,3|2,4]}{Z[1,2,3,4]} =
\Pf\underbrace{\begin{bmatrix}
 0 & -G_ {1,2}' & -G_ {1,3}' & 1 \\
 \fm G_ {1,2}' & 0 & -G_ {2,3}' & 1 \\
 \fm G_ {1,3}' & \fm G_ {2,3}' & 0 & 1 \\
 -1 & -1 & -1 & 0
\end{bmatrix}}_{M^{(G)}_{\textsf{DFU}\Od}}
= -G'_{1,2} - G'_{2,3} - G'_{3,1}\,,
\]
which matches \cite[eqn.~5.6b]{KW4}.

For the partial pairing $1,4|2|6,7$ the encoding string is $\lambda=
\U{1}\S{2}\I{3}\D{4}\I{5}\F{6}\OD{7}$,
the only $\mu\succeq\lambda$ is
$\mu=\U{1}\S{2}\I{3}\D{4}\I{5}\F{6}\OD{7}$,
and $\dddot\mu=\OP{1}\II{3}\OC{4}\II{5}\OC{6}\OD{7}$, so
\[
\frac{Z[1,4|2|6,7]}{Z[1|2|3|4|5|6|7]}
= \Pf\underbrace{\scalebox{.79}{$\begin{bmatrix}
 0 & L_{1,3} & L_{1,3}-L_ {1,3}' & L_{1,4}-L_ {1,4}' & L_{1,5} &L_{1,5}-L_ {1,5}' & L_{1,6}-L_ {1,6}' & L_{1,7} \\
 -L_{1,3} & 0 & -L_{3,3} & -L_{3,4} & 0 & -L_{3,5} & -L_{3,6} & 0 \\
 L_ {1,3}'-L_{1,3} & L_{3,3} & 0 & -L_ {3,4}' & L_{3,5} & -L_ {3,5}'& -L_ {3,6}' & L_{3,7} \\
 L_ {1,4}'-L_{1,4} & L_{3,4} & L_ {3,4}' & 0 & L_{4,5} & -L_ {4,5}' &-L_ {4,6}' & L_{4,7} \\
 -L_{1,5} & 0 & -L_{3,5} & -L_{4,5} & 0 & -L_{5,5} & -L_{5,6} & 0 \\
 L_ {1,5}'-L_{1,5} & L_{3,5} & L_ {3,5}' & L_ {4,5}' & L_{5,5} & 0 &-L_ {5,6}' & L_{5,7} \\
 L_ {1,6}'-L_{1,6} & L_{3,6} & L_ {3,6}' & L_ {4,6}' & L_{5,6} & L_{5,6}' & 0 & L_{6,7} \\
 -L_{1,7} & 0 & -L_{3,7} & -L_{4,7} & 0 & -L_{5,7} & -L_{6,7} & 0 \\
\end{bmatrix}$}}_{M^{(L)}_{\textsf{USIDIF}\Od}}
\]
while $\overline\mu=\OP{1}\SS{2}\OC{4}\OC{6}\OD{7}$, so
\[
\frac{Z[1,4|2|6,7]}{Z[1,2,3,4,5,6,7]}
= \Pf\underbrace{\scalebox{1}{$\displaystyle\begin{bmatrix}
 0 & G_{1,2}-G_ {1,2}' & G_{1,2} & G_{1,4}-G_ {1,4}' & G_{1,6}-G_{1,6}' & 1 \\
 G_ {1,2}'-G_{1,2} & 0 & G_{2,2} & -G_ {2,4}' & -G_ {2,6}' & 1 \\
 -G_{1,2} & -G_{2,2} & 0 & -G_{2,4} & -G_{2,6} & 0 \\
 G_ {1,4}'-G_{1,4} & G_ {2,4}' & G_{2,4} & 0 & -G_ {4,6}' & 1 \\
 G_ {1,6}'-G_{1,6} & G_ {2,6}' & G_{2,6} & G_ {4,6}' & 0 & 1 \\
 -1 & -1 & 0 & -1 & -1 & 0 \\
\end{bmatrix}$}}_{M^{(G)}_{\textsf{USIDIF}\Od}}
\]

For $1,2|3,7|4,6$ we have $\lambda=\textsf{UDFUID}\Od$, there are two $\mu$'s such that $\mu\succeq\lambda$:
\[
 \frac{Z[1,2|3,7|4,6]}{Z[1,2,3,4,5,6,7]}
= \Pf M^{(G)}_{\textsf{UDFUID}\Od} + \Pf M^{(G)}_{\textsf{DDFUIU}\Od}
\]

For the partial pairing $1,3|2|4,10|5,6|7,9$ we have $\lambda=\textsf{USDFUDUID}\Od$, there are five $\mu$'s such that $\mu\succeq\lambda$, and for one of these $\mu$'s the skew Young diagram $\lambda^\circ/\mu^\circ$ has two Dyck tilings:
\begin{multline*}
 \frac{Z[1,3|2|4,10|5,6|7,9]}{Z[1,2,3,4,5,6,7,8,9,10]}
= \Pf M^{(G)}_{\textsf{USDFUDUID}\Od} +\Pf M^{(G)}_{\textsf{USDFUUDID}\Od}\\ +\Pf M^{(G)}_{\textsf{DSDFUDUIU}\Od} +\Pf M^{(G)}_{\textsf{DSDFUUDIU}\Od} +2\times\Pf M^{(G)}_{\textsf{DSDFUUUID}\Od}
\end{multline*}

\subsection{Corollaries}

The formulas in Theorem~\ref{thm:Pf-udfe} immediately imply the following statement.
\begin{corollary}
For a partition $\tau$ on $\{1,\ldots,n\}$ in which $n$ is not in a singleton part, on an annular-one graph with $n$ nodes,
the ratio $\frac{Z[\tau]}{Z[1|2|\cdots|n]}$ is a polynomial in the variables $L$ and $L'$ with integer coefficients.  Similarly, $\frac{Z[\tau]}{Z[1,2,\cdots,n]}$ is a polynomial in $G$ and $G'$ with integer coefficients.
\end{corollary}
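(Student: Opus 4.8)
The plan is to read off the corollary from Theorem~\ref{thm:Pf-udfe} in two stages: first for partial pairings in which node~$n$ is paired, where the theorem applies directly, and then for arbitrary partitions in which $n$ is not a singleton, by way of the expansion of a partition into partial pairings recalled in the introduction.

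For the first stage, I would observe that every ingredient on the right-hand sides of \eqref{dot-sum-PfL} and \eqref{bar-sum-PfG} is integral. The coefficient $[\text{\# of c.i.\ Dyck tilings of }\lambda^\circ/\mu^\circ]$ is a count, hence a nonnegative integer. The Pfaffian of an antisymmetric matrix is, by definition, a signed sum over perfect matchings of products of its entries, so it is a polynomial in those entries with coefficients in $\{-1,0,+1\}$. By the defining formula for $M_\sigma(A,A')$, each entry of $M^{(L)}_\mu$ is an integer-linear combination of the variables $L_{i,j}$ and $L'_{i,j}$, and each entry of $M^{(G)}_\mu$ is an integer-linear combination of the $G_{i,j}$, the $G'_{i,j}$, and the constant~$1$. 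Hence $\Pf M^{(L)}_\mu$ is a polynomial in $L,L'$ with integer coefficients, $\Pf M^{(G)}_\mu$ is a polynomial in $G,G'$ with integer coefficients, and a nonnegative-integer-weighted sum of such polynomials is again an integer polynomial. This settles the corollary whenever $\tau$ is itself a partial pairing with $n$ paired.

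For the second stage, given a partition $\tau$ in which $n$ lies in a part of size at least two, I would invoke the fact from \cite{KW4} that $Z[\tau]$ equals an integer linear combination $\sum_{\tau'} c_{\tau'}\,Z[\tau']$ over partial pairings $\tau'$, with each $c_{\tau'}\in\mathbb{Z}$ (as illustrated by the worked expansion in the introduction). Dividing by $Z[1|2|\cdots|n]$, respectively by $Z[1,2,\dots,n]$, and applying the first stage to each term then exhibits the two ratios as integer combinations of integer polynomials, completing the argument.

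The routine parts --- integrality of the Pfaffian and of the matrix entries --- are immediate from the definitions. The point demanding care, and the main obstacle, lies in the second stage: one must confirm both that the partition-to-partial-pairing expansion of \cite{KW4} has integer (rather than merely half-integer) coefficients, and, crucially, that the hypothesis ``$n$ is not a singleton'' forces node~$n$ to remain paired in every partial pairing $\tau'$ occurring with nonzero coefficient, so that the doubleton-at-$n$ hypothesis of Theorem~\ref{thm:Pf-udfe} holds for each term. Once these structural properties of the expansion are in hand, integrality propagates automatically through the linear combination.
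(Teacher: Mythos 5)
Your proposal is correct and follows the same route the paper intends: the paper offers no explicit proof, asserting only that the corollary is ``immediately implied'' by Theorem~\ref{thm:Pf-udfe}, which is precisely your first stage (tiling counts are nonnegative integers, Pfaffians are $\pm1$-signed sums of products of entries, and the entries of $M^{(L)}_\mu$ and $M^{(G)}_\mu$ are integer combinations of the variables). Your second stage --- reducing a general partition with $n$ non-singleton to partial pairings with $n$ paired via the integer-coefficient expansion from \cite{KW4} recalled in the introduction --- is exactly the step the paper leaves implicit, and the two caveats you flag (integrality of that expansion's coefficients and preservation of the pairing at node $n$) are properties of the \cite{KW4} expansion that the paper takes for granted rather than gaps in your argument.
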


It was known that these ratios are polynomials in the $L$ and $L'$ variables, or the $G$ and $G'$ variables \cite{KW4}, but the integrality of the coefficients was previously a mystery.

Recalling
\[
  \frac{Z[1,3|2,4]}{Z[1,2,3,4]} = -G'_{1,2} - G'_{2,3} - G'_{3,1}\,,
\]
observe that this polynomial is invariant under the substitution $G'_{i,j}\to G'_{i,j}+f(i)-f(j)$.  The next corollary states that this is a general phenomenon for the $G$-$G'$-polynomials of any partition:
\begin{corollary} \label{coboundary}
For a partition $\tau$ on $\{1,\ldots,n\}$ in which $n$ is not in a singleton part, on an annular-one graph with $n$ nodes,
the $G$-$G'$-polynomial for $\frac{Z[\tau]}{Z[1,2,\cdots,n]}$ is invariant under replacing each $G'_{i,j}$ with $G'_{i,j}+f(i)-f(j)$.
\end{corollary}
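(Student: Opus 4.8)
The plan is to reduce everything to the Pfaffian formula \eqref{bar-sum-PfG}. The integer coefficients there (the numbers of cover-inclusive Dyck tilings of $\lambda^\circ/\mu^\circ$) do not involve $G$ or $G'$, so it suffices to prove that each individual Pfaffian $\Pf M^{(G)}_\mu$ is invariant under the substitution $G'_{i,j}\mapsto G'_{i,j}+f(i)-f(j)$. I would fix $\mu$, write $M=M^{(G)}_\mu=M_{\overline\mu}(G,G')$ with $G_{i,n}$ set to $1$, and put $\sigma=\overline\mu$.

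First I would record the effect of the substitution on the matrix. The variables $G'$ enter $M_\sigma(G,G')$ only through the term $-G'_{\sigma(i),\sigma(j)}$ in the block where $\sigma_i,\sigma_j\neq\Od$; the $\Od$-rows and $\Od$-columns contain no $G'$ at all. Hence the substitution changes $M$ by an additive perturbation $\Delta$ with $\Delta_{i,j}=f(\sigma(j))-f(\sigma(i))$ when $\sigma_i,\sigma_j\neq\Od$ and $\Delta_{i,j}=0$ otherwise. Letting $a$ be the indicator vector of the non-$\Od$ positions and $b$ the vector with $b_i=f(\sigma(i))$ at non-$\Od$ positions and $b_i=0$ at $\Od$ positions, a direct check gives $\Delta=a\,b^{\mathsf{T}}-b\,a^{\mathsf{T}}$, a rank-at-most-$2$ antisymmetric perturbation (the diagonal causes no trouble, since both $\Delta_{i,i}$ and the corresponding entry of $M$ vanish).

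The crucial observation is that $a$ is already a column of $M$. Let $j_0$ be the unique position carrying the node-$n$ symbol $\Od$. In column $j_0$ the entries over non-$\Od$ rows $i$ are $G_{\sigma(i),n}$, all of which were replaced by $1$, while the $\Od$-rows give $0$; thus column $j_0$ of $M$ is exactly $a$, i.e. $a=M e_{j_0}$ for the standard basis vector $e_{j_0}$. I would then take the transvection $P=I+b\,e_{j_0}^{\mathsf{T}}$ and compute, using $M^{\mathsf{T}}=-M$ together with $e_{j_0}^{\mathsf{T}}Me_{j_0}=M_{j_0,j_0}=0$, that
\[
PMP^{\mathsf{T}}=M+Me_{j_0}\,b^{\mathsf{T}}+b\,e_{j_0}^{\mathsf{T}}M=M+a\,b^{\mathsf{T}}-b\,a^{\mathsf{T}}=M+\Delta .
\]
Since $j_0$ is an $\Od$ position we have $b_{j_0}=0$, so $\det P=1+e_{j_0}^{\mathsf{T}}b=1$. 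The Pfaffian congruence identity $\Pf(PMP^{\mathsf{T}})=\det(P)\,\Pf(M)$ then gives $\Pf(M+\Delta)=\Pf(M)$. As this is an exact polynomial identity in the entries of $M$ and in the values of $f$, there is no issue of invertibility or genericity, and summing over $\mu$ with the tiling coefficients finishes the corollary.

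I expect the only real content to be verifying the two structural facts cleanly: (i) that the $G'$-shift produces exactly the rank-two term $a\,b^{\mathsf{T}}-b\,a^{\mathsf{T}}$, and (ii) that normalizing $G_{i,n}=1$ makes the all-ones vector $a$ literally the node-$n$ column of $M$. Fact (ii) is the key insight and, once seen, makes the congruence argument immediate; fact (i) is routine bookkeeping, requiring only that node $n$ contributes a single $\Od$-labeled position and that $G'$ never appears in the $\Od$-rows or $\Od$-columns. I do not anticipate a genuine obstacle beyond organizing this bookkeeping, since the transvection $P=I+b\,e_{j_0}^{\mathsf{T}}$ does all the work.
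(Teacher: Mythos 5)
Your proof is correct and takes essentially the same route as the paper: the paper also reduces to showing that each individual $\Pf M^{(G)}_\mu$ in \eqref{bar-sum-PfG} is invariant, and realizes the shift $G'_{i,j}\mapsto G'_{i,j}+f(i)-f(j)$ as a determinant-one congruence obtained by adding $f$-multiples of the node-$n$ row and column (whose nonzero entries are exactly the $1$'s coming from the normalization $G_{i,n}\mapsto 1$). Your transvection $P=I+b\,e_{j_0}^{\mathsf T}$ is simply a more explicit write-up of those row/column operations, and is if anything slightly more careful than the paper's one-line version about the vanishing of the perturbation at the other $\Od$ positions.
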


\begin{proof}[Proof of Corollary~\ref{coboundary}]
Consider each Pfaffian in the formula from Theorem~\ref{thm:Pf-udfe}. Since the last column (row) is all $1$'s ($-1$'s), we can add an all-$f(i)$'s row to row $i$ and subtracting an all-$f(i)$'s-column from column $i$, without changing the value of the Pfaffian.  Since $G'_{i,j}$ occurs only in row $i$ and column $j$ (and row $j$ and column $i$), with coefficient $1$ (and $-1$), these operations replace each $G'_{i,j}$ with $G'_{i,j}+f(i)-f(j)$ and keep the Pfaffian invariant.
\end{proof}

We remark that it was known \cite{KW4} that substituting $G'_{i,j}\to G'_{i,j}+f(i)-f(j)$ and then evaluating the polynomial at the values of $G$ and $G'$ that arise from an annular-one graphs will give a result independent of $f$.  Corollary~\ref{coboundary} is a stronger statement, since it was  not known whether the values of $G$ and $G'$ that arise from annular-one graphs are full-dimensional or whether
they satisfy algebraic relations which cause the substituted $G$-$G'$-polynomials, when evaluated at these values, to be independent of $f$.

\section{Determinant formulas}
\newcommand{\B}{\mathbf{B}}

For an annular partial pairing $\tau$ on $n$ nodes, let $\lambda$ be its encoding string, let $T$ denote the set of internalized nodes, and $Q$ denote the set of singleton nodes.
Let $k$ denote the order of the Dyck path $\lambda^\circ$, i.e., half its length, so that $n=2k+2+|Q|+|T|$.

Let $S\subset\{1,\dots,n\}\setminus(Q\cup T)$ be a subset of the paired nodes which has size $k+1$ and includes $n$, and let $R = \{1,\dots,n\}\setminus(S\cup Q\cup T)$
be the complementary set of paired nodes.
Given $\lambda$ and $S$, Kenyon and Wilson \cite{KW4} defined
\begin{multline} \label{eq:A-inverse}
  \B_{\lambda,S}(\zeta) = \sum_{\mu\succeq\lambda} [\text{\# of c.i.\ Dyck tilings of $\lambda/\mu$}] \times \zeta^{\text{\# indices in $S$ at which $\mu$ has an up-step}
} \times \\
  \zeta^{-\text{\# indices in $S\setminus\{n\}$ after $\lambda$'s flat step}
   \,+\, \text{\# down steps of $\lambda$ after $\lambda$'s flat step}}\,,
\end{multline}
and showed how to use these polynomials $\B_{\lambda,S}$ to compute the ratios of grove partition functions.  Specifically
\begin{equation} \label{dot-sum-detL(z)}
  \frac{Z[\tau]}{Z[1|2|\cdots|n]} = (-1)^{|T|}\times\lim_{z\to 1} \sum_{R,S} \frac{\B_{\lambda,S}(z^2)}{(1-z^2)^k} \det \L_{R,T}^{S,T}\,,
\end{equation}
where $\L_{R,T}^{S,T}$ denotes the submatrix of $\L$ whose rows are indexed by $R$ and $T$ and whose columns are indexed by $S$ and $T$,
and we need to specify a pairing between the indices of $R$ and $S$ to determine the signs of the determinants.  We use the Dvoretzky-Motzkin cycle lemma bijection to make this pairing, as indicated below (figure taken from \cite{KW4}).  Essentially we make a path with period $2k+1$ which has an up step at each index in $R$ and a down step at each index in $S\setminus\{n\}$.  The up and down steps are the endpoints of chords underneath the path, and these chords define the pairing, where the extra up step is paired with $n$.
\[
 {}_{R=\{3,4,6,7,8,11\}}^{S=\{1,2,5,9,10,12\}}
 \!\Rightarrow\! \raisebox{-12pt}{\includegraphics[scale=0.43]{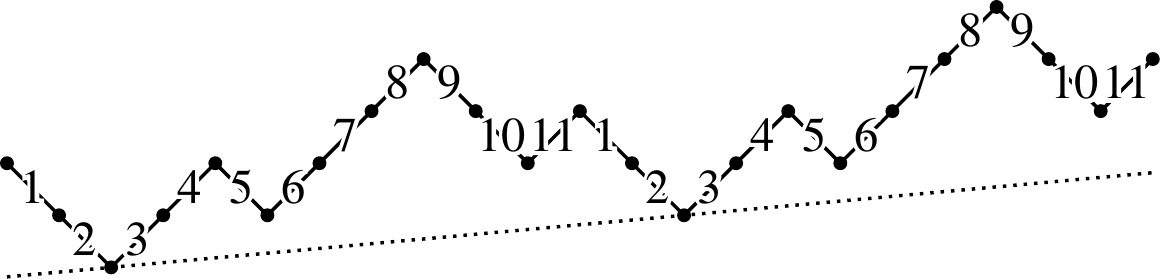}}
 \!\Rightarrow\! \raisebox{-12pt}{\includegraphics[scale=0.43]{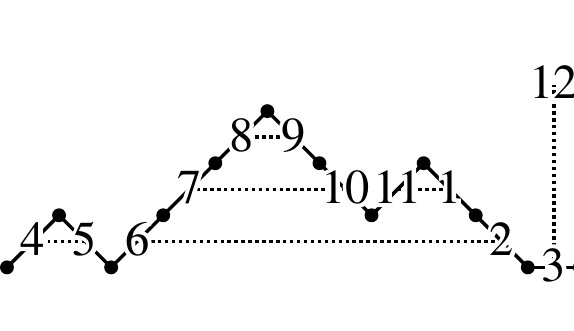}}
 \!\Rightarrow\!
 {}_4^5|{}_6^2|{}_{\;7}^{10}|{}_8^9|{}_{11}^{\,1}|{}_{\;3}^{12}
\]

Recall that $\L_{i,j}=\L_{i,j}(z)$ is a function of $z$.
We change variables to $z=e^t$ (here we differ slightly from the notation
in \cite{KW4}, which used $\zeta=z^2=e^t$).
We expand $\L_{i,j}(e^t) = L_{i,j}+ L'_{i,j} t + \cdots$, and
let $\hat\L_{i,j}$ denote its linearized approximation $\hat\L_{i,j} = L_{i,j}+ L'_{i,j} t$.  In general the series expansion for $\L_{i,j}(e^t)$ will have more terms, but while
it is not \textit{a priori\/} obvious, the limit \eqref{dot-sum-detL(z)} can be evaluated using $\hat\L_{i,j}$ in place of $\L_{i,j}(e^t)$:
\begin{equation} \label{dot-sum-detL}
  \frac{Z[\tau]}{Z[1|2|\cdots|n]} = (-1)^{|T|}\times\lim_{t\to0} \frac{1}{(-2t)^k} \sum_{R,S} \B_{\lambda,S}(e^{2t}) \det \hat\L_{R,T}^{S,T}\,,
\end{equation}
and a similar formula
\begin{equation} \label{bar-sum-detG}
  \frac{Z[\tau]}{Z[1,2,\ldots,n]} = \lim_{t\to0} \frac{1}{(-2t)^k} \sum_{R,S} \B_{\lambda,S}(e^{2t}) \det \hat\Gv_{R,Q}^{S,Q}\,,
\end{equation}
holds, where $\hat\Gv_{i,j}=G_{i,j}+ G'_{i,j} t$ and each $\hat\Gv_{i,n}$ is replaced with $1$ \cite{KW4}.  From these formulas we derive the Pfaffian formulas.

\section{Pfaffian formulas}

We start in section~\ref{sec:pf-det} by showing that a Pfaffian can be expressed as a sum of determinants.  In section~\ref{sec:tripartite} we give an application of this identity to tripartite pairings.  Then we use the Pfaffian identity and equations \eqref{dot-sum-detL} and \eqref{bar-sum-detG} to prove Theorem~\ref{thm:Pf-udfe} in section~\ref{sec:main}.

\subsection{The Pfaffian as a sum of determinants} \label{sec:pf-det}

For any matching $M=(i_1,j_1),\ldots,(i_k,j_k)$, we define $\sign(M)=(-1)^{\cross(M)}$, where $\cross(M)$ is the number of crossings of arcs from $M$ when $M$ is drawn as $k$ arcs between the points $\{1,\ldots,2k\}$ on a line.  For the left endpoint of each arc we can associate an up-step, and for each right endpoint we can associate a down-step, which results in a Dyck path.  The down steps of the matching $M$ are the down steps of its Dyck path, i.e., $\{\max(i_1,j_1),\dots,\max(i_k,j_k)\}$.

Given a set of positive integers $R$ for which $R\subset\{1,\dots,2|R|\}$, we define $d_R$ as follows.  We let $n=2|R|$ and $S=\{1,\dots,n\}\setminus R$.  For an arbitrary matrix $A$ we define
\begin{equation} \label{dR(M)}
d_R(A) \colonequals \begin{cases}
 \det[A_{i,j}]_{i\in R}^{j\in S} & n\in S \\
 \det[-A_{i,j}]_{i\in R}^{j\in S} & n\in R\,,\end{cases}
\end{equation}
 where $R$ and $S$ are ordered according to the Dvoretzky-Motzkin bijection as described above.  For example,
\[
d_{\{3,4,6,7,8,11\}}(A) = \det[A_{i,j}]_{i=4,6,7,8,11,3}^{j=5,2,10,9,1,12}\,.
\]

\begin{lemma}\label{lemma:det_expansion_S}
Suppose $n\geq 0$ is even, $R\subset\{1,\ldots,n\}$, $|R|=n/2$, and $S=\{1,\ldots,n\}\setminus R$.
Let $A$ be an arbitrary $n\times n$ matrix.
Then
\[ d_R(A) = \sum_{\substack{\text{directed matchings $M$ s.t.}\\\text{$M$ matches $R$ to $S$}}} (-1)^{\cross(M)} \prod_{(r,s)\in M} (-1)^{1_{r>s}}A_{r,s} \,.\]
\end{lemma}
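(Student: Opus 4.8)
The plan is to reduce the claimed identity to a statement about Pfaffians, exploiting the fact recorded above that $\sign(M)=(-1)^{\cross(M)}$, so that for an antisymmetric matrix the Pfaffian expands over perfect matchings with these signs. Concretely, I would introduce the antisymmetric $n\times n$ matrix $B$ with $B_{i,j}=A_{i,j}$ when $i\in R$ and $j\in S$, with $B_{i,j}=-A_{j,i}$ when $i\in S$ and $j\in R$, and with $B_{i,j}=0$ when $i,j$ lie in the same one of $R,S$. The matchings contributing nontrivially to $\Pf(B)$ are then exactly the directed matchings pairing $R$ with $S$: for $M_\phi$ arising from a bijection $\phi\colon R\to S$, reading each pair with its smaller index first converts the product $\prod_{\{i,j\}\in M_\phi,\,i<j}B_{i,j}$ into $\prod_{r}(-1)^{1_{r>\phi(r)}}A_{r,\phi(r)}$, since an arc with $r>\phi(r)$ contributes $B_{\phi(r),r}=-A_{r,\phi(r)}$. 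Combined with $\sign(M_\phi)=(-1)^{\cross(M_\phi)}$, this shows that the right-hand side of the lemma is precisely $\Pf(B)$.

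It then remains to prove $d_R(A)=\Pf(B)$, which I would do by transporting $B$ to block form. Let $u$ be the permutation listing $\{1,\dots,n\}$ in the order $\rho(1),\dots,\rho(k),\varsigma(1),\dots,\varsigma(k)$, where $(\rho(a),\varsigma(a))$ are the pairs of the Dvoretzky–Motzkin bijection and $k=n/2$. Reordering $B$ by $u$ produces the block matrix $\left(\begin{smallmatrix}0 & C\\ -C^{\mathsf T} & 0\end{smallmatrix}\right)$, where $C=[A_{i,j}]_{i\in R}^{j\in S}$ is taken in the DM order. Since reordering an antisymmetric matrix by a permutation multiplies its Pfaffian by that permutation's sign, and since $\Pf\left(\begin{smallmatrix}0 & C\\ -C^{\mathsf T} & 0\end{smallmatrix}\right)=(-1)^{k(k-1)/2}\det C$, we get $\Pf(B)=\sign(u)\,(-1)^{k(k-1)/2}\det C$. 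Comparing with the definition of $d_R(A)$ — which equals $\det C$ when $n\in S$ and $(-1)^k\det C$ when $n\in R$ — the identity $d_R(A)=\Pf(B)$ reduces to the purely combinatorial claim that $\sign(u)\,(-1)^{k(k-1)/2}$, corrected by $(-1)^k$ when $n\in R$, equals $1$.

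I would evaluate this scalar through properties of the DM matching $M_{\mathrm{DM}}=\{(\rho(a),\varsigma(a))\}$ itself. Factoring $u$ through the interleaved order $\rho(1),\varsigma(1),\dots,\rho(k),\varsigma(k)$, the unshuffle contributes $(-1)^{k(k-1)/2}$, while the interleaved order, compared against the canonical smaller-index-first listing of $M_{\mathrm{DM}}$, contributes $(-1)^{\cross(M_{\mathrm{DM}})}$ together with a factor $(-1)^{\#\{a:\rho(a)>\varsigma(a)\}}$ recording the arcs whose $R$-endpoint lies to the right of its $S$-endpoint. The two copies of $(-1)^{k(k-1)/2}$ cancel, so the whole scalar collapses to $(-1)^{\cross(M_{\mathrm{DM}})+\#\text{backward arcs}}$ (times the $n\in R$ correction), and the lemma comes down to the parity statement that this exponent is even when $n\in S$ and congruent to $k$ when $n\in R$.

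This parity statement is where I expect the real work to lie, as it is exactly the feature the DM ordering is engineered to provide. I would prove it from the cycle-lemma description of the bijection, tracking how the unique admissible cyclic rotation pairs up- and down-steps: the backward arcs are precisely the chords that wrap around the cyclic path, and one controls $\cross(M_{\mathrm{DM}})+\#\text{backward arcs}\pmod 2$ by analyzing how advancing the base point one step at a time trades a wrap-around for a change in the crossing count. Alternatively, and perhaps more cleanly, one can localize the obstacle: because the DM matching corresponds to the identity permutation in the Leibniz expansion of $\det C$ (row $a$ paired with column $a$), a short elementary check that both $(-1)^{\cross+\#\text{backward}}$ and the Leibniz sign flip when two partners are swapped propagates the identity to all $\phi$, leaving only this single reference case of the parity computation to settle.
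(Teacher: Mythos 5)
Your reduction is sound, and once the linear algebra is unwound it is essentially the paper's own argument in Pfaffian packaging. Identifying the right-hand side with $\Pf(B)$ for your block-antisymmetric $B$, conjugating to block form, and using $\Pf\left(\begin{smallmatrix}0&C\\-C^{\mathsf T}&0\end{smallmatrix}\right)=(-1)^{k(k-1)/2}\det C$ is correct bookkeeping, and the scalar you end up with, $(-1)^{\cross(M_{\mathrm{DM}})+\#\{\text{backward arcs}\}}$ (with the extra $(-1)^{k}$ when $n\in R$), is exactly the sign the paper must evaluate at the identity permutation. The paper skips the Pfaffian detour: it expands $\det A_R^S$ by the Leibniz formula, posits $\sign(\pi)=(-1)^{\cross(M(\pi))}(-1)^{\sum 1_{r>s}}(-1)^{(n/2)1_{n\in R}}$, verifies this at the identity (your reference case), and propagates to all $\pi$ by the same transposition argument you describe as your second finishing strategy. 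So the two proofs coincide at the crux, and the Pfaffian framing buys nothing here --- it is also mildly backwards, since the paper's Theorem~\ref{Pf} is deduced \emph{from} this lemma.

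The genuine gap is that you never prove the parity fact you correctly flag as ``where the real work lies'': that $\cross(M_{\mathrm{DM}})+\#\{\text{backward arcs}\}$ is even when $n\in S$ and congruent to $k$ when $n\in R$. Your first sketch (moving the base point of the cyclic rotation one step at a time) is plausible but not carried out, and your second alternative (transposition propagation) merely re-reduces the problem to this same reference case rather than settling it. The paper closes it with one observation about the Dvoretzky--Motzkin matching: it is non-crossing in the annulus, so when cut open and drawn on the line the only crossings involve the arc $(j,n)$, and the arcs crossing it are precisely the wrap-around arcs $(a,b)$ with $a>b$ (each satisfies $a>j>b$, hence crosses $(j,n)$ exactly once, and any two of them are nested in each other). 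Thus $\cross(M_{\mathrm{DM}})=\sum_{(i,j)}1_{i>j}$ when $n\in S$, and $\cross(M_{\mathrm{DM}})=\sum_{(i,j)}1_{i<j}=k-\#\{\text{backward arcs}\}$ when $n\in R$, which is exactly your parity statement. Supply this (or an equivalent analysis of the cycle-lemma output) and your proof is complete; without it, the central claim rests on an unproved assertion.
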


\begin{proof}
Let $k=n/2$.
The arrangement of elements of $R$ and $S$ in $d_R$
is given by the Dvoretzky-Motzkin cycle lemma bijection, and in particular corresponds to a matching
$M_0=\{(r_1,s_1),\dots,(r_k,s_k)\}$ (each $r_\ell\in R$ and $s_\ell\in S$) which has no crossings when drawn in the annulus.
By the determinant expansion,
\begin{equation}\label{det_expansion}
\det A_R^S=\sum_{\pi \in \mathfrak{S}_k} \sign(\pi)\prod_{\ell=1}^k A_{r_\ell,s_{\pi(\ell)}}\,.
\end{equation}
Suppose $n\notin R$.  When we draw matching $M_0$ on a line, there may be crossings of the arc $(j,n)$ from arcs $(a,b)$, such that $a>j>b$;
these are precisely the arcs whose starting point is larger than its endpoint.
When drawn on the line, the number of crossings is $\cross(M_0)=\sum_{(i,j)\in M_0} 1_{i>j}$.
If instead $n\in R$, then $\cross(M_0)=\sum_{(i,j)\in M_0} 1_{i<j}$.

For a permutation $\pi$ let the matching $M(\pi)$ be $M(\pi)=\{(r_1,s_{\pi(1)}),\ldots,r_k,s_{\pi(k)}\}$.
The matching $M_0$ corresponds to the identity permutation,  so at least when the permutation $\pi$ is the identity, we have
\[\sign(\pi) = (-1)^{\cross(M(\pi))} (-1)^{ \sum_{(r,s)\in M(\pi)} 1_{r>s}} (-1)^{(n/2)1_{n\in R}}\,,\]
a formula which we now verify for the other permutations.
Any permutation $\pi$ can be expressed as a sequence of transpositions, and it is a straightforward case analysis to verify
that any transposition changes the parity of the number of crossings in the matching plus the number of arcs directed backwards.
\end{proof}

\begin{theorem} \label{Pf}
Suppose $n\geq 0$ is even.
If $A$ is an arbitrary $n\times n$ matrix, and $d_R(A)$ is as defined in \eqref{dR(M)}, then
\begin{equation} \label{sum-dR=Pf(A)}
\sum_{\substack{R\subset\{1,\dots,n\}\\|R|=n/2}} d_R(A) =
\Pf \big[A-A^T\big]\,,
\end{equation}
where $A^T$ is the transpose of $A$.
\end{theorem}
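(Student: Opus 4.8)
The plan is to sum the identity of Lemma~\ref{lemma:det_expansion_S} over all admissible $R$ and reorganize the result into the Pfaffian expansion of $A-A^T$. Write $B=A-A^T$, which is antisymmetric. Recall that with the crossing-sign convention $\sign(M)=(-1)^{\cross(M)}$ fixed earlier, the Pfaffian of $B$ is
\[
\Pf(B)=\sum_{\bar M}(-1)^{\cross(\bar M)}\prod_{\{a,b\}\in\bar M,\,a<b}B_{a,b}\,,
\]
the sum ranging over (undirected) perfect matchings $\bar M$ of $\{1,\dots,n\}$; this is the standard Pfaffian, since the permutation sign attached to a pairing equals $(-1)^{\cross}$ of its arc diagram. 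So it suffices to identify $\sum_R d_R(A)$ with this expression.

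First I would substitute the formula of Lemma~\ref{lemma:det_expansion_S} and interchange the order of summation, obtaining
\[
\sum_{|R|=n/2} d_R(A)=\sum_{|R|=n/2}\ \sum_{M\colon R\to S}(-1)^{\cross(M)}\prod_{(r,s)\in M}(-1)^{1_{r>s}}A_{r,s}\,,
\]
where $S=\{1,\dots,n\}\setminus R$ and the inner sum is over directed matchings from $R$ to $S$. The key combinatorial observation is that this double sum is exactly a sum over all \emph{oriented} perfect matchings of $\{1,\dots,n\}$: a choice of $R$ together with a directed matching from $R$ to $S$ is the same datum as a perfect matching in which every edge carries an orientation, with $R$ recovered as the set of tails. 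Each of the $n/2$ edges has exactly one tail, so $|R|=n/2$ holds automatically, and the correspondence is a clean bijection, so no matching is over- or under-counted.

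Next I would group the oriented matchings by their underlying undirected matching $\bar M$. Since $\cross(M)$ is defined through arcs on a line and is unaffected by reversing arcs, the factor $(-1)^{\cross(M)}=(-1)^{\cross(\bar M)}$ can be pulled outside the sum over the $2^{n/2}$ orientations of $\bar M$. The remaining sum over orientations factors across the edges, and for a single edge $\{a,b\}$ with $a<b$ the two orientations contribute
\[
(-1)^{1_{a>b}}A_{a,b}+(-1)^{1_{b>a}}A_{b,a}=A_{a,b}-A_{b,a}=B_{a,b}\,.
\]
Hence the sum over orientations equals $\prod_{\{a,b\},\,a<b}B_{a,b}$, and assembling the pieces gives $\sum_R d_R(A)=\sum_{\bar M}(-1)^{\cross(\bar M)}\prod_{a<b}B_{a,b}=\Pf(A-A^T)$.

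The only real content is sign bookkeeping, so the step I expect to be most delicate is confirming that the crossing count is genuinely orientation-independent and that the per-edge contribution carries exactly the right sign to yield $A_{a,b}-A_{b,a}$ rather than its negative; once these are pinned down, the factorization across edges and the recognition of the Pfaffian expansion are immediate. Everything about the Dvoretzky--Motzkin ordering of $R$ and $S$ has already been absorbed into Lemma~\ref{lemma:det_expansion_S}, so it need not be revisited here.
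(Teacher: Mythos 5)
Your proposal is correct and takes essentially the same route as the paper: both use Lemma~\ref{lemma:det_expansion_S} to write $\sum_R d_R(A)$ as a single sum over directed (oriented) matchings, and then identify that sum with the expansion of $\Pf[A-A^T]$, the $2^{n/2}$ orientations of each undirected matching accounting exactly for the expansion of $\prod_{a<b}(A_{a,b}-A_{b,a})$. The only difference is direction of presentation --- you collapse the directed sum into the Pfaffian while the paper expands the Pfaffian into the directed sum --- and your per-edge sign check $(-1)^{1_{a>b}}A_{a,b}+(-1)^{1_{b>a}}A_{b,a}=A_{a,b}-A_{b,a}$ is the same bookkeeping the paper performs.
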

\begin{proof}
From Lemma~\ref{lemma:det_expansion_S}, we see that the left-hand side of~\eqref{sum-dR=Pf(A)} equals
\[\sum_{\text{directed matchings $M$}} (-1)^{\cross(M)} \prod_{(r,s)\in M} (-1)^{1_{r>s}}A_{r,s}\,.\]

Let $n=2k$, and let $W_{i,j}=A_{i,j}-A_{j,i}$.
We can expand the Pfaffian as
\begin{equation*}\label{pfaffian_expansion}
\Pf[W] =  \sum_{\substack{\text{undirected matchings $M$}\\M=\{(i_1,j_1),\ldots,(i_k,j_k)\}\\i_1<j_1,\dots,i_k<j_k\\j_1<\cdots<j_k}} (-1)^{\cross(M)} \prod_{\ell=1}^k W_{i_\ell,j_\ell}.
\end{equation*}
When we make the substitution $W_{i,j}=A_{i,j}-A_{j,i}$, this has the effect of choosing directions for each pairing, converting the sum over undirected matchings into a sum over directed matchings:
\[ \Pf\big[A-A^T\big] = \sum_{\text{directed matchings $M$}} (-1)^{\cross(M)} \prod_{(r,s)\in M} (-1)^{1_{r>s}} A_{r,s}\,.\qedhere\]
\end{proof}

\subsection{Applications of the Pfaffian identity} \label{sec:tripartite}

Before continuing with our main result, we mention an interesting consequence of Theorem~\ref{Pf}.  Curtis, Ingerman, and Morrow \cite{CIM} gave an interpretation of the determinant $\det L_R^S$ when $R=\{r_1,\ldots,r_k\}$ and $S=\{s_1,\ldots,s_k\}$ are disjoint subsets of $\{1,\ldots,n\}$, which, when translated into the language of groves, asserts that
\begin{equation}\label{CIM}
  \det L_{r_1,\ldots,r_k}^{s_1,\ldots,s_k} = \sum_{\pi\in\mathfrak{S}_k} \sign(\pi)\frac{Z[r_1,s_{\pi(1)}|\cdots|r_k,s_{\pi(k)}|\text{(other nodes singletons)}]}{Z[1|\cdots|n]}\,.
\end{equation}
This formula holds for any graph.

If $B$ and $C$ are two disjoint sets of nodes, and we set
$A_{i,j}=0$ when $i\in C$ or $j\in B$ and otherwise set $A_{i,j}=L_{i,j}$,
then Theorem~\ref{Pf} with the above interpretation of the minors
implies $\Pf[A-A^T]$ is a sum over directed matchings for which the nodes in $B$ are sources and the nodes in $C$ are destinations, of the sign of the directed matching times the grove ratio associated with that matching.  In particular, nodes of $B$ are only paired with nodes not in $B$, and nodes in $C$ are only paired with nodes not in $C$.  If a matching $M$ contains a pair $(i,j)$ where $i,j\in B$ or $i,j\in C$, then $M$ is not included in the sum.  Notice that if $i,j\notin B\cup C$, then the matching $(M\setminus\{(i,j)\})\cup\{(j,i)\}$, in which the pair $(i,j)$ has been reversed, has the same weight as $M$ but opposite sign.
Thus
\begin{multline}\label{L[RGB]}
\Pf\bordermatrix[{[]}]{& \scriptstyle j\in B & \scriptstyle j\notin B\cup C & \scriptstyle j\in C\cr
\scriptstyle i\in B & 0 & \fm L_{i,j} & \fm L_{i,j} \cr
\scriptstyle i\notin B\cup C & -L_{i,j} & 0 & \fm L_{i,j} \cr
\scriptstyle i\in C & -L_{i,j} & -L_{i,j} & 0 \cr
} =\\[8pt]=
 \sum_{{\substack{\text{directed matchings $M$}\\\text{if $(i,j)\in M$ then}\\\text{$i\in B$ or $j\in C$ or both}}}}
\left( (-1)^{\cross(M)} \prod_{(i,j)\in M} (-1)^{1_{j<i}}\right)
 \dddot{Z}[i_1,j_1|\cdots|i_{n/2},j_{n/2}]\,.
\end{multline}
When the graph is circular planar (i.e., the nodes lie on the outer face of a planar graph), and $B=\{1,\ldots,|B|\}$ and $C=\{n+1-|C|,\ldots,n\}$, there is only one matching~$M$ for which $\dddot{Z}[M]\neq 0$, and the sign is positive, so $\dddot{Z}[M]$ is the Pfaffian.
For example,
\[
\dddot{Z}\left[
\begin{tikzpicture}[baseline=0.0cm-2.5pt]
\draw[fill=gray!40!white,draw=none] (0,0) circle(0.8);
    \foreach \x in {1,...,6} { \coordinate (\x) at ({0.8*cos((\x-0.5)*360/6)},{0.8*sin((\x-0.5)*360/6)});}
    \draw[very thick] (1) arc(120:240:{0.8/sqrt(3.0)});
    \draw[very thick] (3) arc(240:360:{0.8/sqrt(3.0)});
    \draw[very thick] (5) arc(0:120:{0.8/sqrt(3.0)});
    \node [circle,fill=white!80!red,draw=red,inner sep=0.5pt] at (1) {1};
    \node [circle,fill=white!80!red,draw=red,inner sep=0.5pt] at (2) {2};
    \node [circle,fill=white!80!green,draw=green,inner sep=0.5pt] at (3) {3};
    \node [circle,fill=white!80!green,draw=green,inner sep=0.5pt] at (4) {4};
    \node [circle,fill=white!80!blue,draw=blue,inner sep=0.5pt] at (5) {5};
    \node [circle,fill=white!80!blue,draw=blue,inner sep=0.5pt] at (6) {6};
\end{tikzpicture}
\right]=
\frac{Z[1,6|2,3|4,5]}{Z[1|2|3|4|5|6]} =
\Pf\begin{bmatrix}
    0     &    0     &\fm L_{1,3}&\fm L_{1,4}&\fm L_{1,5}&\fm L_{1,6}\\
    0     &    0     &\fm L_{2,3}&\fm L_{2,4}&\fm L_{2,5}&\fm L_{2,6}\\
   -L_{1,3}&   -L_{2,3}&    0     &    0     &\fm L_{3,5}&\fm L_{3,6}\\
   -L_{1,4}&   -L_{2,4}&    0     &    0     &\fm L_{4,5}&\fm L_{4,6}\\
   -L_{1,5}&   -L_{2,5}&   -L_{3,5}&   -L_{4,5}&    0     &    0     \\
   -L_{1,6}&   -L_{2,6}&   -L_{3,6}&   -L_{4,6}&    0     &    0
\end{bmatrix}\,.
\]
This is one of several tripartite matching formulas that
were derived earlier by Kenyon and Wilson \cite{KW2} using a different method \cite{KW1}.

The determinant formula \eqref{CIM} has been extended in several directions.  Kenyon and Wilson \cite{KW4} showed that if $Q=\{q_1,\ldots,q_\ell\}$ and $T=\{t_1,\ldots,t_m\}$,
and $Q,R,S,T$ partition $\{1,\ldots,n\}$, then
\begin{equation} \label{directed-grove}
  \det \L_{r_1,\ldots,r_k,t_1,\ldots,t_m}^{s_1,\ldots,s_k,t_1,\ldots,t_m}
   = (-1)^m \sum_{\pi\in\mathfrak{S}_k} \sign(\pi)\frac{\Z\big[{}_{r_1}^{s_{\pi(1)}}|\cdots|{}_{r_k}^{s_{\pi(k)}}|q_1|\cdots|q_\ell\big]}{\Z[1|\cdots|n]}\,,
\end{equation}
where the $\Z$'s give the weighted sum of ``cycle-rooted groves''.  (The cycle weights go to zero and $\Z$ converges to $Z$ when $z\to 1$, see \cite{KW4} for further explanation.)
When we combine Theorem~\ref{Pf} with the above formula, we obtain
the following:
\begin{theorem}
Suppose there are $n$ nodes, $P,Q,T$ partition $\{1,\ldots,n\}$, and $|P|=2k$ is even.  For each $i\in P$ let $\alpha_i$ and $\beta_i$ be parameters, and for $i\in T$ let $\alpha_i=\beta_i=1$.  List the nodes $p_1,\ldots,p_{2k},t'_1,t_1,\ldots,t'_m,t_m$, where $t'_i$ is a second copy of $t_i$, and let $T'=\{t'_1,\ldots,t'_m\}$.  Then
\newpage
\begin{multline}\label{Pf(LL)}
\Pf
\bordermatrix[{[]}]{& \scriptstyle j\in P\cup T & \scriptstyle j\in T'\cr
\scriptstyle i\in P\cup T &
\alpha_i\beta_j\L_{i,j}-\alpha_j\beta_i \L_{j,i} &
\alpha_i\L_{i,j} \cr
\scriptstyle i\in T' &
-\alpha_j \L_{j,i} &
0 \cr
}_{{\mathrlap{\raisebox{12pt}{$\scriptstyle j=p_1,\ldots,p_{2k},t'_1,t_1,\ldots,t'_m,t_m$}}i=p_1,\ldots,p_{2k},t'_1,t_1,\ldots,t'_m,t_m}}
=\\=
\sum_{\substack{\text{\rm directed matchings $M$ of $P$}\\M=\{(r_1,s_1),\ldots,(r_k,s_k)\}}} \left((-1)^{\cross(M)} \prod_{(r,s)\in M} (-1)^{1_{s<r}} \alpha_r\beta_s\right) \times\frac{\Z[{}_{r_1}^{s_{1}}|\cdots|{}_{r_k}^{s_{k}}|q_1|\cdots|q_\ell]}{\Z[1|\cdots|n]}\,.
\end{multline}
\end{theorem}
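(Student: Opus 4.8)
The plan is to recognize the matrix inside the Pfaffian in \eqref{Pf(LL)} as $A-A^T$ for a suitable (non-antisymmetric) matrix $A$, apply Theorem~\ref{Pf} to rewrite the Pfaffian as a signed sum of minors of $A$, and then evaluate those minors with the cycle-rooted grove determinant formula \eqref{directed-grove}. Order the index set $P\cup T\cup T'$ as $p_1,\dots,p_{2k},t'_1,t_1,\dots,t'_m,t_m$, and define $A$ by $A_{i,j}=\alpha_i\beta_j\L_{i,j}$ for $i,j\in P\cup T$, by $A_{i,t'}=\alpha_i\L_{i,t}$ for $i\in P\cup T$ and $t'\in T'$ the copy of $t\in T$, and by $A_{t',j}=0$ for $t'\in T'$ and all $j$. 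A direct check of the four blocks shows that $A-A^T$ is exactly the matrix displayed in \eqref{Pf(LL)} (using $\alpha_t=\beta_t=1$ for $t\in T$ and $\L_{i,t'}=\L_{i,t}$), so $\Pf[A-A^T]$ is the left-hand side.

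First I would apply Theorem~\ref{Pf}, giving $\Pf[A-A^T]=\sum_R d_R(A)$ over $(k+m)$-subsets $R$ of the $(2k+2m)$-element index set. Since every entry $A_{t',j}$ with $t'\in T'$ vanishes, any $R$ meeting $T'$ produces an all-zero row, so only $R\subseteq P\cup T$ contribute and $T'\subseteq S$. Next I would argue that in fact $T\subseteq R$: if some $t\in T$ lay in $S$, then the submatrix $A_R^S$ would have two equal columns, the one indexed by $t$ (with entries $A_{i,t}=\alpha_i\L_{i,t}$) and the one indexed by its copy $t'$ (with entries $A_{i,t'}=\alpha_i\L_{i,t}$), forcing $d_R(A)=0$. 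Hence the surviving $R$ are exactly $R=R_P\cup T$ with $R_P$ a $k$-subset of $P$, and $S=(P\setminus R_P)\cup T'$.

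For such an $R$ I would factor $\alpha_i$ out of each row $i$ and $\beta_j$ out of each column $j\in P\setminus R_P$; since $\alpha_t=1$ for $t\in T$, the row factors contribute $\prod_{i\in R_P}\alpha_i$, and the remaining matrix is precisely $\L_{R_P,\,T}^{(P\setminus R_P),\,T}$ (the $T'$ columns reproduce the $T$ columns because $\L_{i,t'}=\L_{i,t}$). Applying \eqref{directed-grove} then expresses $d_R(A)$ as $\pm(-1)^m\big(\prod_{i\in R_P}\alpha_i\big)\big(\prod_{j\in P\setminus R_P}\beta_j\big)\sum_{\pi\in\mathfrak S_k}\sign(\pi)\,\Z[{}_{r_1}^{s_{\pi(1)}}|\cdots|q_1|\cdots]/\Z[1|\cdots|n]$. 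A directed matching $M$ of $P$ is the same data as a source-set $R_P$ of size $k$ together with a bijection $\pi$ from $R_P$ to $P\setminus R_P$, so summing over $R_P$ and over $\pi$ reorganizes the whole expression into a single sum over directed matchings $M$ of $P$, with grove ratio $\Z[M]/\Z[1|\cdots|n]$ and weight $\prod_{(r,s)\in M}\alpha_r\beta_s$, matching the shape of the right-hand side of \eqref{Pf(LL)}.

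The hard part will be the sign bookkeeping: I must show that the sign attached to $d_R(A)$ by the Dvoretzky--Motzkin ordering in \eqref{dR(M)} (here the last index is $t_m\in T\subseteq R$, so the $n\in R$ case always applies), together with the factor $(-1)^m$ and $\sign(\pi)$ supplied by \eqref{directed-grove}, collapses exactly to $(-1)^{\cross(M)}\prod_{(r,s)\in M}(-1)^{1_{s<r}}$ for the associated directed matching $M$ of $P$. I expect to verify this in the style of Lemma~\ref{lemma:det_expansion_S}, checking the identity for one reference matching and then confirming that swapping two elements changes both sides the same way, while keeping track of the fact that crossings among the $P$-chords are unaffected by the trailing $T$ and $T'$ indices. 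This reconciliation is the only delicate point; everything else is the mechanical combination of Theorem~\ref{Pf} with \eqref{directed-grove}.
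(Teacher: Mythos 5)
Your proposal is correct and follows essentially the same route as the paper: the paper's proof also sets $A_{i,j}=\alpha_i\beta_j\L_{i,j}$ with $\alpha_{t'}=0$, $\beta_{t'}=1$ for $t'\in T'$ (which is exactly your matrix $A$), applies Theorem~\ref{Pf}, and interprets the surviving minors via \eqref{directed-grove}, noting only that the $(-1)^m$ is absorbed because each $t'_i$ is listed before $t_i$. The sign reconciliation you flag as the delicate point is left implicit in the paper as well, and your plan for verifying it (in the style of Lemma~\ref{lemma:det_expansion_S}) is the right one.
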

\begin{proof}
  If $i\in T'$ then take $\alpha_i=0$ and $\beta_i=1$.
  Then apply Theorem~\ref{Pf} with $A_{i,j} = \alpha_i \beta_j \L_{i,j}$,
  and use \eqref{directed-grove} to interpret the determinants.
  The factor of $(-1)^m$ in \eqref{directed-grove} is absorbed into the Pfaffian
  because we listed each $t'_i$ before $t_i$.
\end{proof}

Any of \eqref{CIM} or \eqref{L[RGB]} or \eqref{directed-grove} can be recovered from \eqref{Pf(LL)} by choosing the $\alpha$'s and $\beta$'s suitably and/or setting $z=1$.

\subsection{Proof of main theorem} \label{sec:main}

Our approach to proving the Pfaffian formulas in Theorem~\ref{thm:Pf-udfe} is to prove that the right hand sides of \eqref{dot-sum-detL} and \eqref{bar-sum-detG} are equal as polynomials in formal variables to the Pfaffian expressions.  We will not, for example, use the fact that $\sum_j L_{i,j}=0$ or other relations that the electrical network quantities might satisfy, since the $L_{i,j}$'s and the $G_{i,j}$'s satisfy different relations.  By working with formal variables that do not satisfy these extra relations, the same proof works for both the $L$-$L'$ polynomials and the $G$-$G'$ polynomials.

The roles of the $\s$ and $\I{}$ symbols are reversed between the $L$-$L'$ polynomials for $\dddot Z[\tau]$ and the $G$-$G'$ polynomials for $\overline Z[\tau]$.  As a matter of convenience, we will give these symbols the roles they have for the $G$-$G'$ polynomials.  To obtain the $L$-$L'$ polynomials, we will at some point substitute $\I{}$ for $\s$ and $\s$ for $\I{}$.

Let $\lambda$ be a labeled augmented cyclic Dyck path with $n$ symbols.
Let $\lambda^*$ be the substring obtained from $\lambda$ by excising all $\s$ and $\I{}$ symbols that it contains, let $n^*$ be the length of $\lambda^*$, and let $E_\lambda$ be the set of labels of $\s$ symbols.
For our running example
\[
\lambda = \U{1}\S{2}\U{3}\D{4}\D{5}\I{6}\D{7}\F{8}\I{9}\U{10}\S{1\!1}\U{12}\D{13}\OD{14} \]
we have
\[
\lambda^* =
\U{1}\U{3}\D{4}\D{5}\D{7}\F{8}\U{10}\U{12}\D{13}\OD{14}\,,
\quad\quad\quad n^*=10\,,\quad\quad\quad E_\lambda=\{2,11\}\,.
\]
For an arbitrary $n\times n$ matrix $\A$ we define
\begin{equation} \label{Z_lambda}
\Zs_\lambda(\A)\colonequals
 \sum_{\substack{R^*\subset\{1,\dots,n^*\}\\|R^*|=n^*/2\\\{n^*\}\cap R=\varnothing\\S^*=\{1,\dots,n^*\}\setminus R^*}}
 \frac{\B_{\lambda^*,S^*}(e^{2t})}{(1-e^{2t})^k} \det \A_{\lambda^*(R^*),E_\lambda}^{\lambda^*(S^*),E_\lambda}
\end{equation}
Recall from \eqref{eq:A-inverse} that $\B_{\lambda,S}(\zeta)$ is a sum over $\mu\succeq\lambda$ of $(\text{\# of c.i.\ Dyck tilings of $\lambda^\circ/\mu^\circ$})$ times
$\zeta$ to the power
\begin{multline*}
\text{(\# up steps of $\mu$ in $S$ before flat step)}\\
-\text{(\# down steps of $\mu$ in $S$ after flat step) }\\
+\text{(\# down steps of $\lambda$ after flat step)}\,.
\end{multline*}
We define
\begin{multline} \label{Z^mu}
\Zs^\mu(\A)\colonequals \frac{1}{(1-e^{2t})^k}
 \sum_{\substack{R^*\subset\{1,\dots,n^*\}\\|R^*|=n^*/2\\\{n^*\}\cap R^*=\varnothing\\S^*=\{1,\dots,n^*\}\setminus R^*}}
\raisebox{-18pt}{$\displaystyle\begin{aligned}
&\det \A_{\mu^*(R^*),E_\mu}^{\mu^*(S^*),E_\mu}\times\\
&\exp\big[2t\,\big|\text{(up steps of $\mu^*$ before flat step)} \cap S^*\big|\big] \div\\
&\exp\big[2t\,\big|\text{(down steps of $\mu^*$ after flat step)} \cap S^*\big|\big]\,.
\end{aligned}$}
\end{multline}
Observe that if $\mu\succeq\lambda$ then $\mu^*(\cdot)=\lambda^*(\cdot)$ and $E_\mu=E_\lambda$, so
\begin{multline} \label{sum-Z^mu}
\Zs_\lambda(\A)=
\exp\big[2t\,(\text{\# down steps of $\lambda$ after flat step})\big]\times\\
\sum_{\mu\succeq\lambda} [\text{\# of c.i.\ Dyck tilings of $\lambda^\circ/\mu^\circ$}] \Zs^\mu(\A)\,.
\end{multline}

The following lemma will help us evaluate $\Zs^\mu(\A)$:

\newcommand{\Ac}{\widetilde{A}}
\begin{lemma} \label{lem:Pf(A[B,C,U,V])}
Suppose $n\geq0$ is even, $B,C,U,V\subset\{1,\dots,n\}$, $B\cap C=\varnothing$, and $U\cap V=\varnothing$.
Let $A$ be an arbitrary $n\times n$ matrix.
Then
\begin{multline} \label{Pf(A[B,C,U,V])}
 \sum_{\substack{R\subset\{1,\dots,n\}\\|R|=n/2\\B\subset R\\C\cap R=\varnothing\\S=\{1,\ldots,n\}\setminus R}}
  \exp\Big[2t\big(|S\cap U| -|S\cap V|\big)\Big]
  d_R(A)
=\\= \exp\big[t(|U|-|V|)\big]\times
\Pf\bordermatrix[{[]}]{
&\scriptstyle j\in B& \scriptstyle j\notin B\cup C&\scriptstyle j\in C\cr
\scriptstyle i\in B& 0&\Ac_{i,j}&\Ac_{i,j}\cr
\scriptstyle i\notin B\cup C   &-\Ac_{j,i}& \Ac_{i,j}-\Ac_{j,i} &\Ac_{i,j}\cr
\scriptstyle i\in C&-\Ac_{j,i}&-\Ac_{j,i}&0}_{\ijnn\,,}
\end{multline}
where
\[
\Ac_{i,j}=A_{i,j} \exp\big[t(1_{j\in U}-1_{i\in U}-1_{j\in V}+1_{i\in V})\big]\,.
\]
\end{lemma}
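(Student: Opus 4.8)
The plan is to reduce the identity to Theorem~\ref{Pf} by recognizing the block matrix inside the Pfaffian---call it $W$---as the antisymmetrization $W=\hat A-\hat A^{T}$ of a single matrix $\hat A$, exactly as in the source/destination masking that produced \eqref{L[RGB]}. The right choice is
\[
\hat A_{i,j}=\Ac_{i,j}\,1_{i\notin C}\,1_{j\notin B}\,,
\]
and a short case check over the nine blocks $\{i\in B,\ i\notin B\cup C,\ i\in C\}\times\{j\in B,\ j\notin B\cup C,\ j\in C\}$ confirms that $\hat A_{i,j}-\hat A_{j,i}$ reproduces every entry of $W$. The only wrinkle compared with \eqref{L[RGB]} is that $\Ac$ is not symmetric, so the two off-diagonal contributions $\Ac_{i,j}$ and $-\Ac_{j,i}$ must be tracked separately; this is precisely what the indicators $1_{i\notin C}$ and $1_{j\notin B}$ arrange.

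With $W=\hat A-\hat A^{T}$ established, Theorem~\ref{Pf} gives $\Pf[W]=\sum_{R}d_R(\hat A)$, and Lemma~\ref{lemma:det_expansion_S} expands this as a sum over directed matchings $M$ of $\{1,\dots,n\}$ weighted by $(-1)^{\cross(M)}\prod_{(r,s)\in M}(-1)^{1_{r>s}}\hat A_{r,s}$. Since $\hat A_{r,s}$ vanishes whenever $r\in C$ or $s\in B$, only those matchings survive in which every node of $B$ is a source and every node of $C$ is a sink; writing $R$ for the sources and $S$ for the sinks of $M$, this is exactly the constraint $B\subset R$, $C\cap R=\varnothing$ on the left-hand side. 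So both sides are indexed by the same family of directed matchings, and the signs $(-1)^{\cross(M)}$ and $(-1)^{1_{r>s}}$ agree term-by-term; it remains only to match the scalar weights.

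The last step is the exponent bookkeeping. Factoring $\Ac_{r,s}=A_{r,s}\exp[t(1_{s\in U}-1_{r\in U}-1_{s\in V}+1_{r\in V})]$ out of the product over $(r,s)\in M$, I collect the total exponent $t\sum_{(r,s)\in M}(1_{s\in U}-1_{r\in U}-1_{s\in V}+1_{r\in V})$. Because the sources $R$ and sinks $S$ partition $\{1,\dots,n\}$, summing the $s$-indicators over $M$ gives $|S\cap U|$ while summing the $r$-indicators gives $|R\cap U|=|U|-|S\cap U|$ (and similarly for $V$), so the exponent collapses to $2t(|S\cap U|-|S\cap V|)-t(|U|-|V|)$. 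The first term reproduces the weight $\exp[2t(|S\cap U|-|S\cap V|)]$ attached to $d_R(A)$ on the left-hand side, while the global factor $\exp[-t(|U|-|V|)]$, transposed to the other side of the equation, yields the claimed prefactor $\exp[t(|U|-|V|)]$.

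I expect the main obstacle to be the first step: guessing and then verifying the asymmetric $\hat A$ whose antisymmetrization is $W$. Everything downstream---the matching expansion via Theorem~\ref{Pf} and Lemma~\ref{lemma:det_expansion_S}, and the partition identity $|R\cap U|+|S\cap U|=|U|$---is routine once $W=\hat A-\hat A^{T}$ is in place, but the off-diagonal blocks of $W$ mix $\Ac_{i,j}$ and $\Ac_{j,i}$ asymmetrically, so the indicator-masked form of $\hat A$ must be pinned down with care before the reduction goes through.
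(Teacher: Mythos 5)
Your proposal is correct and follows essentially the same route as the paper: zero out the entries of $\Ac$ with source in $C$ or target in $B$ so that the block matrix becomes the antisymmetrization handled by Theorem~\ref{Pf}, and account for the exponential weights via the identity $|R\cap U|+|S\cap U|=|U|$. The only cosmetic difference is that you track the exponent bookkeeping at the level of individual directed matchings (via Lemma~\ref{lemma:det_expansion_S}) whereas the paper does it once per determinant by a row/column rescaling turning $d_R(A)$ into $d_R(\Ac)$; the two computations are identical in content.
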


\begin{proof}
  Observe that $2|S\cap U|=|S\cap U|-|R\cap U|+|U|$, and similarly for $2|S\cap V|$. Since $\Ac$ is obtained from $A$ by multiplying the $i$th row by $\exp[t(1_{i\in V}-1_{i \in U})]$ and $j$th column by $\exp[t(1_{j\in U}-1_{j\in V})]$, the determinants $d_R(A)$ and $d_R(\Ac)$ differ by a factor depending on $R$ and $S$:
\[
\exp\Big[2t\big(|S\cap U| -|S\cap V|\big)\Big]\times
  d_R(A) = \exp\big[t(|U|-|V|)\big]\times d_R(\Ac)\,.
\]
We can set $A_{i,j}=0$ whenever $j\in B$ or $i\in C$,
since these variables do not occur in equation~\eqref{Pf(A[B,C,U,V])}.
We then remove the restrictions $B\subset R$ and $C\cap R=\varnothing$ in the
summation on the left-hand side of equation~\eqref{Pf(A[B,C,U,V])},
since with the above variables zeroed out, $d_R(A)=0$ whenever $B \not \subset R$ or $C\cap R\neq \varnothing$.  Without these restrictions on the sum, we can apply Theorem~\ref{Pf} to sum up the $d_R(\Ac)$'s to obtain~\eqref{Pf(A[B,C,U,V])}.
\end{proof}

\newcommand{\AAc}{\widetilde{\A}}
\begin{lemma} \label{Pf(M_mu(A))}
Let $\mu$ be a labeled augmented cyclic Dyck path with $n$ symbols,
and suppose $\overline\mu$ has length $m$.
Let $U$ denote the set of labels in $\overline\mu$ above $\Op$ symbols,
and let $V$ denote the set of labels in $\overline\mu$ above $\Om$ symbols.
Let $\A$ be an arbitrary $n\times n$ matrix, and let
\begin{equation}\label{A-tilde}
\AAc_{i,j}=\A_{i,j} \exp\big[t(1_{j\in U}-1_{i\in U}-1_{j\in V}+1_{i\in V})\big]\,.
\end{equation}
Then
\begin{equation} \label{Zsmu(A)}
\Zs^\mu(\A)=
\Pf\bordermatrix[{[]}]{
&  \scriptstyle\overline\mu_j\neq\Od & \scriptstyle\overline\mu_j=\Od \cr
\scriptstyle\overline\mu_i\neq\Od &
\frac{\displaystyle\AAc_{\overline\mu(i),\overline\mu(j)}-\AAc_{\overline\mu(j),\overline\mu(i)}}{\textstyle 1-e^{2t}}&\AAc_{\overline\mu(i),\overline\mu(j)}\cr\cr
\scriptstyle\overline\mu_i=\Od&
-\AAc_{\overline\mu(j),\overline\mu(i)}&0}_{\ijNN{m}\,.}
\end{equation}
\end{lemma}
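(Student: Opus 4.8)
The plan is to read the right-hand side of~\eqref{Zsmu(A)} as a special case of Lemma~\ref{lem:Pf(A[B,C,U,V])} applied to the positions of $\overline\mu$, after first stripping off the scalar $(1-e^{2t})^{-k}$ that is hidden in the top-left block. First I would count, for each perfect matching contributing to the Pfaffian in~\eqref{Zsmu(A)}, how many pairs fall in the top-left ($\overline\mu_i,\overline\mu_j\neq\Od$) block. The path $\overline\mu$ has $1+|E_\mu|$ symbols equal to $\Od$ and $2k+1+|E_\mu|$ symbols different from $\Od$; since the $\Od$--$\Od$ block is zero, each of the $1+|E_\mu|$ many $\Od$-indices must be matched to a non-$\Od$ index, which leaves exactly $2k$ non-$\Od$ indices to be matched among themselves. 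Thus every monomial of the Pfaffian uses the factor $1/(1-e^{2t})$ exactly $k$ times, so $(1-e^{2t})^{-k}$ factors out and what remains is the Pfaffian of the matrix whose top-left block is $\AAc_{\overline\mu(i),\overline\mu(j)}-\AAc_{\overline\mu(j),\overline\mu(i)}$ and whose other blocks are as in~\eqref{Zsmu(A)}.

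This stripped matrix is precisely the matrix on the right of~\eqref{Pf(A[B,C,U,V])} with ground set the $m$ positions of $\overline\mu$, with $B=\varnothing$ and $C=\{i:\overline\mu_i=\Od\}$, with underlying entries $A_{i,j}=\A_{\overline\mu(i),\overline\mu(j)}$, and with weight sets $U,V$ the positions carrying $\Op$ and $\Om$. (These position-sets match the label-sets $U,V$ of~\eqref{A-tilde}, since any label sitting above an $\Op$ or $\Om$ is a paired, non-singleton node and hence occurs at a single position of $\overline\mu$, so $\AAc_{\overline\mu(i),\overline\mu(j)}$ coincides with the $\Ac$-entry produced by the lemma.) Applying Lemma~\ref{lem:Pf(A[B,C,U,V])} then expresses the stripped Pfaffian as $\exp[-t(|U|-|V|)]\sum_R\exp[2t(|S\cap U|-|S\cap V|)]\,d_R(A)$, the sum ranging over $R$ with $|R|=m/2$ and $C\cap R=\varnothing$, so that all $\Od$-positions lie in $S$.

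The remaining task is to identify this sum with the definition~\eqref{Z^mu} of $\Zs^\mu(\A)$, and the key mechanism is the singleton doubling. Each singleton $e\in E_\mu$ contributes to $\overline\mu$ two positions of label $e$, an $\Oc_e$ and an $\Od_e$, the latter always forced into $S$. Expanding $d_R(A)=\pm\det[A_{\overline\mu(r),\overline\mu(s)}]_{r\in R}^{s\in S}$ via~\eqref{dR(M)} and Lemma~\ref{lemma:det_expansion_S}, any $R$ with some $\Oc_e\in S$ gives a determinant with two equal columns labelled $e$ and hence vanishes; so only the $R$ for which every $\Oc_e\in R$ survive. For these, $R=R^*\sqcup\{\Oc_e\}_e$ with $R^*$ a $(k+1)$-subset of the non-singleton positions avoiding the $\Od$ of node $n$, the $\Oc_e$ supply the repeated row $e$ and the $\Od_e$ the repeated column $e$, and $d_R(A)=\pm\det\A_{\mu^*(R^*),E_\mu}^{\mu^*(S^*),E_\mu}$ is exactly a determinant of~\eqref{Z^mu}.

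The hard part is the two bookkeeping reconciliations that this identification demands, and I expect these to be the main obstacle. First, the Dvoretzky--Motzkin sign carried by $d_R$ together with the $(-1)^{1_{r>s}}$ factors must be shown to agree with the sign and ordering convention used for the determinants in~\eqref{Z^mu}; this is a sign chase through the cycle-lemma pairing. Second, the exponential weights must be matched: under the correspondence $\Op\leftrightarrow$ (up step before the flat step) and $\Om\leftrightarrow$ (down step after the flat step), the per-term factor $\exp[2t(|S\cap U|-|S\cap V|)]$ should reproduce the weights of~\eqref{Z^mu}, while the global scalar $\exp[-t(|U|-|V|)]$ and the passage between the two forms of the weight have to be handled using the identity $2|S\cap U|=|S\cap U|-|R\cap U|+|U|$ (the same device that drives Lemma~\ref{lem:Pf(A[B,C,U,V])}). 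Once the singleton-doubling correspondence is made exact and these signs and exponents are reconciled, summing the surviving $d_R$'s yields $\Zs^\mu(\A)$ and completes the proof.
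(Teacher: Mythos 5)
Your proposal follows essentially the same route as the paper's proof: both reduce the identity to Lemma~\ref{lem:Pf(A[B,C,U,V])} applied with $B=\varnothing$ and $C$ the $\Od$-positions of $\overline\mu$, use the repeated-column vanishing of the determinants to account for the doubled singletons, and differ only cosmetically in direction (you peel the Pfaffian apart and extract $(1-e^{2t})^{-k}$ by counting top-left-block pairs in each matching, while the paper assembles it and extracts that factor by scaling rows and columns in and out of $C$ by $x^{\mp1/2}$). The two ``bookkeeping reconciliations'' you defer are exactly the points the paper discharges in a line each: the sign chase disappears because adjoining the adjacent pairs $(b,b+1)$, $b\in B_\mu$, to the cycle-lemma matching of $(R^*,S^*)$ yields precisely the cycle-lemma matching of $(R,S)$, so $d_R$ already carries the correct sign and ordering; and the exponents match because the $\Op$- and $\Om$-positions of $\overline\mu$ are by construction the up-steps-before-$\F{}$ and down-steps-after-$\F{}$ of $\mu^\s$, which correspond bijectively, preserving membership in $S$, to those of $\mu^*$ (the residual global factor $\exp[t(|U|-|V|)]$ you flag is $1+O(t)$ and immaterial for Lemma~\ref{Pf(M_mu(AA))} and Theorem~\ref{thm:Pf-udfe}).
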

\begin{proof}
Recall that $\mu^\s$ is the string obtained from $\mu$ by replacing each $\S{i}$ symbol with $\SS{i}$ and omitting each $\I{}$ symbol.
Let $B_\mu$ denote the positions of these new $\Oc$'s (replacing an~$\s$) in $\mu^\s$, let $C_\mu$ denote the positions of these new $\Od$'s in $\mu^\s$.  The strings $\mu^\s$ and $\overline\mu$ have the same length, which we are calling $m$.  If $\mu$ is our earlier example
\[\mu=\U{1}\S{2}\U{3}\D{4}\D{5}\I{6}\D{7}\F{8}\I{9}\U{10}\S{1\!1}\U{12}\D{13}\OD{14}\,,\]
then
\[
\mu^\s = \U{1}\SS{2}\U{3}\D{4}\D{5}\D{7}\F{8}\U{10}\SS{1\!1}\U{12}\D{13}\OD{14}
\]
and
\[
B_\mu=\text{positions of }\{\OC{2},\OC{11}\}=\{2,10\}
\quad\quad\text{and}\quad\quad
C_\mu=\text{positions of }\{\OD{2},\OD{11}\}=\{3,11\}\,.
\]

Let $U_\mu$ denote the set of positions at which $\mu$ has an $\U{}$
before its $\F{}$, and let $V_\mu$ denote positions at which $\mu$ has
a $\D{}$ after its $\F{}$.

For a given $\mu$, the subsets $R^*$ of $\{1,\ldots,n^*\}$ for which $|R^*|=n^*/2$ are in
straightforward bijective correspondence with those subsets $R$ of
$\{1,\ldots,m\}$ for which $|R|=m/2$, $B_\mu\subset R$ and $C_\mu\cap R=\varnothing$,
i.e., $R=\overline{\mu}^{-1}(\mu^*(R^*))\cup B_\mu$.  Consider the pairing between $R^*$ and $S^*=\{1,\ldots,n^*\}\setminus R^*$ given by the cycle lemma bijection.  This pairing naturally extends to a pairing between $R$ and $S=\{1,\ldots,m\}\setminus R$, where a pair $(r^*,s^*)$ gets mapped to the pair $(\overline{\mu}^{-1}(\mu^*(r^*)),\overline{\mu}^{-1}(\mu^*(s^*)))$, with the pairing between $R$ and $S$ also containing the pairs $(b,b+1)$ for each $b\in B_\mu$.  Provided $n^*\notin R^*$, this extended pairing is precisely the pairing between $R$ and $S$ given by the cycle lemma bijection.
Thus
\begin{equation} \label{mu*-mu+}
 \sum_{\substack{R^*\subset\{1,\dots,n^*\}\\|R^*|=n^*/2\\\{n^*\}\cap R^*=\varnothing\\S^*=\{1,\dots,n^*\}\setminus R^*}}
\raisebox{-18pt}{$\displaystyle\left(\begin{aligned}
&\det \A_{\mu^*(R^*),E_\mu}^{\mu^*(S^*),E_\mu}\times\\
&\exp\big[2t\,\big| U_{\mu^*} \cap S^*\big|\big] \div\\
&\exp\big[2t\,\big| V_{\mu^*} \cap S^*\big|\big]\
\end{aligned}\right)$}
= \sum_{\substack{R\subset\{1,\dots,m\}\\|R|=m/2\\\{m\}\cap R=\varnothing\\B_\mu\subset R\\C_\mu\cap R=\varnothing\\S=\{1,\dots,m\}\setminus R}}
\raisebox{-18pt}{$\displaystyle\left(\begin{aligned}
&\det \A_{\mu^\s(R)}^{\mu^\s(S)}\times\\
&\exp\big[2t\,\big|U_{\mu^\s}\cap S\big|\big] \div\\
&\exp\big[2t\,\big|V_{\mu^\s}\cap S\big|\big]
\end{aligned}\right)_{\,.}$}
\end{equation}
We could apply Lemma~\ref{lem:Pf(A[B,C,U,V])} with $B=B_\mu$ and $C=C_\mu\cup\{m\}$ to evaluate the right-hand side of \eqref{mu*-mu+},
but it turns out to work better with $B=\varnothing$, $C=C_\mu\cup\{m\}$.
So long as $(C_\mu\cup\{m\})\cap R=\varnothing$, if $B_\mu\not\subseteq R$, then the determinant $\det \A_{\mu^\s(R)}^{\mu^\s(S)}$ has at least one repeated column and therefore does not contribute to the sum.
Applying Lemma~\ref{lem:Pf(A[B,C,U,V])} with $B=\varnothing$, $C=C_\mu\cup\{m\}$, $U=U_{\mu^\s}$, $V=V_{\mu^\s}$, and $n=m$, and then using the fact that $\mu^\s(\cdot)=\overline\mu(\cdot)$, we see that the right-hand side of \eqref{mu*-mu+} equals
\[
\Pf\bordermatrix[{[]}]{
&  j\notin C& j\in C\cr
 i\notin C &
\AAc_{\overline\mu(i),\overline\mu(j)}-\AAc_{\overline\mu(j),\overline\mu(i)}&\AAc_{\overline\mu(i),\overline\mu(j)}\cr
 i\in C&
-\AAc_{\overline\mu(j),\overline\mu(i)}&0}_{\ijNN{m}\,,}
\]
with $\AAc$ defined as in \eqref{A-tilde}.  Observe that $C_\mu\cup\{m\}$, $U_{\mu^\s}$, and $V_{\mu^\s}$ are the locations of $\Od$, $\Op$, and $\Om$ symbols in $\overline\mu$ respectively (which is of course the reason we defined $\overline\mu$ the way we did).

The definition of $\Zs^\mu(\A)$ also contains a factor of $1/(1-e^{2t})^{n^*/2-1}$. If for some $x$ we scale the rows and columns not in $C$ by a factor of $x^{1/2}$, and scale the rows and columns in $C$ by a factor of $x^{-1/2}$, the Pfaffian is scaled by a factor of $x^{[(m-|C|)-|C|]/2}$.
Now $m=n^*+2|E|$ and $|C|=|E|+1$, so $[(m-|C|)-|C|]/2=n^*/2-1$.
Upon taking $x=1/(1-e^{2t})$, we obtain \eqref{Zsmu(A)}.
\end{proof}

So far all these calculations are exact.  Next we take the limit $t\to 0$:
\begin{lemma} \label{Pf(M_mu(AA))}
Let $\mu$ be a labeled augmented cyclic Dyck path with $n$ symbols,
and suppose $\overline\mu$ has length $m$.
Let $\A$ be an $n\times n$ matrix of formal power series for which $\A_{i,j}(t)=\A_{j,i}(-t) = A_{i,j} + A'_{i,j} t + O(t^2)$.  Then
\[
\Zs^\mu(\A) =
\Pf\phantom{\scriptstyle \overline\mu_i\neq\Od}\underbrace{\bordermatrix[{[]}]{
& \scriptstyle \overline\mu_j\neq\Od&\scriptstyle \overline\mu_j=\Od\cr
\mathllap{\scriptstyle \overline\mu_i\neq\Od} &
\left(\!\begin{aligned}
+1_{\overline\mu_i=\Op}-1_{\overline\mu_j=\Op}\\[-3pt]
-1_{\overline\mu_i=\Om}+1_{\overline\mu_j=\Om}
\end{aligned}\right)
A_{\overline\mu(i),\overline\mu(j)}-A'_{\overline\mu(i),\overline\mu(j)}&A_{\overline\mu(i),\overline\mu(j)}\cr
\mathllap{\scriptstyle \overline\mu_i=\Od}&
-A_{\overline\mu(i),\overline\mu(j)}&0}_{\mathrlap{\ijNN{m}}}}_{M_{\overline\mu}(A,A')}\ \ \ \ \ \ \raisebox{12pt}{+O(t)}
\]
\end{lemma}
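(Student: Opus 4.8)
The plan is to extract the constant-in-$t$ term of the exact formula for $\Zs^\mu(\A)$ supplied by Lemma~\ref{Pf(M_mu(A))}. Because $\Pf$ is a polynomial in the matrix entries, it suffices to show that each entry of the Pfaffian matrix in~\eqref{Zsmu(A)} equals the corresponding entry of $M_{\overline\mu}(A,A')$ up to an additive $O(t)$; continuity (indeed polynomiality) of the Pfaffian then upgrades the entrywise expansion to the stated expansion of $\Zs^\mu(\A)$.

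First I would unpack the hypothesis $\A_{i,j}(t)=\A_{j,i}(-t)=A_{i,j}+A'_{i,j}t+O(t^2)$, which says exactly that the constant terms form a symmetric matrix, $A_{j,i}=A_{i,j}$, and the linear terms an antisymmetric one, $A'_{j,i}=-A'_{i,j}$. Writing $\epsilon_i:=1_{\overline\mu_i=\Op}-1_{\overline\mu_i=\Om}$, the definition~\eqref{A-tilde} reads $\AAc_{i,j}=\A_{i,j}\,e^{t(\epsilon_j-\epsilon_i)}$, so every such entry is $A_{\overline\mu(i),\overline\mu(j)}+O(t)$. This immediately disposes of the blocks involving $\Od$: the top-right entry $\AAc_{\overline\mu(i),\overline\mu(j)}$ tends to $A_{\overline\mu(i),\overline\mu(j)}$, and the bottom-left entry $-\AAc_{\overline\mu(j),\overline\mu(i)}$ tends to $-A_{\overline\mu(j),\overline\mu(i)}=-A_{\overline\mu(i),\overline\mu(j)}$ by symmetry of $A$, matching $M_{\overline\mu}(A,A')$ in both cases.

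The one genuinely nontrivial entry is the top-left block $\bigl(\AAc_{\overline\mu(i),\overline\mu(j)}-\AAc_{\overline\mu(j),\overline\mu(i)}\bigr)/(1-e^{2t})$, which is a $0/0$ indeterminate form at $t=0$. I would expand numerator and denominator to first order: with $\A_{i,j}=A_{i,j}+A'_{i,j}t$, $\A_{j,i}=A_{i,j}-A'_{i,j}t$, and $e^{t(\epsilon_j-\epsilon_i)}=1+t(\epsilon_j-\epsilon_i)$ (each modulo $O(t^2)$), the constant terms in the numerator cancel and it becomes $2t\bigl[A'_{i,j}+(\epsilon_j-\epsilon_i)A_{i,j}\bigr]+O(t^2)$, while $1-e^{2t}=-2t+O(t^2)$. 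The ratio therefore converges to $(\epsilon_i-\epsilon_j)A_{i,j}-A'_{i,j}$, and expanding $\epsilon_i-\epsilon_j=1_{\overline\mu_i=\Op}-1_{\overline\mu_j=\Op}-1_{\overline\mu_i=\Om}+1_{\overline\mu_j=\Om}$ gives exactly the top-left entry of $M_{\overline\mu}(A,A')$.

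I do not expect a serious obstacle here; the content is the sign bookkeeping in the $0/0$ limit. The only point requiring care is that the cancellation of the constant terms in the numerator---the reason the $1/(1-e^{2t})$ prefactor produces a finite limit rather than a pole---is precisely the symmetry $A_{j,i}=A_{i,j}$, $A'_{j,i}=-A'_{i,j}$ forced by $\A_{i,j}(t)=\A_{j,i}(-t)$, so I would make sure to invoke that symmetry explicitly before dividing. Once all entries are matched to first order, polynomiality of $\Pf$ finishes the proof.
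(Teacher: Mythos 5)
Your proposal is correct and is precisely the ``straightforward series expansion of the expression from Lemma~\ref{Pf(M_mu(A))}'' that the paper gives as its (one-line) proof: you expand each entry of the Pfaffian in \eqref{Zsmu(A)} to first order in $t$, using the symmetry $A_{j,i}=A_{i,j}$, $A'_{j,i}=-A'_{i,j}$ forced by $\A_{i,j}(t)=\A_{j,i}(-t)$ to resolve the $0/0$ form in the $1/(1-e^{2t})$ block, and invoke polynomiality of the Pfaffian. The sign bookkeeping checks out, so nothing further is needed.
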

\begin{proof}
Straightforward series expansion of the expression from Lemma~\ref{Pf(M_mu(A))}.
\end{proof}

It is also straightforward to extract the coefficients of higher powers of $t$ in the series expansion $\Zs^\mu(\A)$ using Lemma~\ref{Pf(M_mu(A))}.  As discussed earlier, the constant term is relevant for computing grove probabilities.  The term linear in $t$ is relevant for computing expected winding \cite{KW4}, and also depends on just the $A_{i,j}$'s and $A'_{i,j}$'s.

\begin{proof}[Proof of Theorem~\ref{thm:Pf-udfe}]
Immediate from \eqref{dot-sum-detL}, \eqref{bar-sum-detG}, \eqref{Z_lambda}, \eqref{Z^mu}, \eqref{sum-Z^mu} and Lemma~\ref{Pf(M_mu(A))}.
For the $G$-$G'$ polynomials we substitute $G$ for $A$ and $G'$ for $A'$.
For the $L$-$L'$ polynomials we first substitute $\I{}$ for $\S{}$ and $\S{}$ for $\I{}$, and then $L$ for $A$ and $L'$ for $A'$, and we absorb the factor of $(-1)^{|T|}$ from \eqref{dot-sum-detL} into the Pfaffian by writing $\II{i}$ rather than $\SS{i}$.
\end{proof}

\section{Open problems}

The coefficients in the
Pfaffian formulas in Theorem~\ref{thm:Pf-udfe} count Dyck tilings
whose lower path is $\lambda^\circ$ and whose upper path depends on
the summand.  It is known that the sum of these coefficients is the
number of increasing labelings of the planted plane tree associated
with the Dyck path $\lambda^\circ$ \cite{KMPW}.  Is there something more to
understand here?

Is there a polynomial-time algorithm for evaluating $Z[\tau]$?  For certain $\tau$'s there will be few or even just one Pfaffian, though for general $\tau$ the number of Pfaffians is exponentially large in the number of nodes.  But these Pfaffians are all closely related to one another, which suggests the possiblity that some clever linear algebra could be used to evaluate the sum without evaluating each individual Pfaffian.

\newcommand{\MRhref}[2]{\href{http://www.ams.org/mathscinet-getitem?mr=#1}{MR#2}}
\def\@rst #1 #2other{#1}
\newcommand\MR[1]{\relax\ifhmode\unskip\spacefactor3000 \space\fi
  \MRhref{\expandafter\@rst #1 other}{#1}}

\phantomsection
\pdfbookmark[1]{References}{bib}

\bibliographystyle{hmralphaabbrv}
\bibliography{pf}

\end{document}